\def\ORG{TT}
\theoremstyle{plain}
 \newtheorem{thm}{Theorem}[section]
 \newtheorem{theorem}{Theorem}[section]      
 \newtheorem{lemma}[thm]{Lemma}              
 \newtheorem{proposition}[thm]{Proposition}  
\theoremstyle{definition}
 \newtheorem{definition}[thm]{Definition}    
\theoremstyle{remark}
 \newtheorem{remark}[thm]{Remark}            
\DeclareMathOperator{\RE}{Re}
\DeclareMathOperator{\Ad}{Ad}
\def\p{\partial}
\def\D{\mathrm{d}}
\begin{document}

\title[Integral transformations of hypergeometric functions]
{Integral transformations of hypergeometric functions with several variables}

\author{Toshio \textsc{Oshima}}
\address{Center for Mathematics and Data Science, Josai University, 2-3-20 Hirakawacho, Chiyodaku, Tokyo 102-0093, Japan}\email{oshima@ms.u-tokyo.ac.jp}

\thanks{
This work was supported by Grant-in-Aid for Scientific Researches (C), 
No.\ 18K03341, Japan Society of Promotion of Science}

\subjclass[2010]{Primary 34M35; Secondary 34A30, 33C70}
\keywords{integral transformation, hypergeometric function}

\begin{abstract}
As a generalization of Riemann-Liouville integral, 
we introduce integral transformations of convergent power series which can be applied to 
hypergeometric functions with several variables. 
\end{abstract}

\maketitle
\else
%
%
%

\documentclass[dvipdfmx,graybox]{svmult}

\usepackage{tikz}

\usepackage{mathptmx}       
\usepackage{helvet}         
\usepackage{courier}        
\usepackage{type1cm}        
%
\usepackage{makeidx}        
\usepackage{graphicx}       
\usepackage{multicol}       
\usepackage[bottom]{footmisc}

\usepackage{amsmath,amssymb}
\usepackage{bm}

\DeclareMathOperator{\RE}{Re}
\DeclareMathOperator{\Ad}{Ad}

\def\p{\partial}
\def\boldsymbol{\bm}

\makeindex             


\begin{document}

\title*{Integral transformations of hypergeometric functions with several variables}
\author{Toshio Oshima}
\institute{Toshio Oshima \at Josai University, 2-2-20 Hirakawacho, Chiyodaku, 
Tokyo 102--0093, Japan, \\ \email{oshima@ms.u-tokyo.ac.jp}}
%
%
\maketitle

\abstract*{
As a generalization of Riemann-Liouville integral, 
we introduce integral transformations of convergent power series
which can be applied to hypergeometric functions with several variables. 
}
\abstract{
As a generalization of Riemann-Liouville integral, 
we introduce integral transformations of convergent power series
which can be applied to hypergeometric functions with several variables. 
}
\fi

\section{Introduction}
Suppose a function $\phi(x)$ satisfies a linear ordinary differential equation on $\mathbb P^1$. 
Then the Riemann-Liouville integral \eqref{eq:RL} of $\phi(x)$ induces a middle convolution of 
the differential equation defined by Katz \cite{Kz}.
The multiplication of $\phi(x)$ by a simple function $(x-c)^\lambda$ induces an addition of the
differential equation which is also important.  For example, any rigid irreducible linear 
Fuchsian differential equation is constructed by successive applications of middle convolutions 
and additions.  Hence we have an integral representation of its solution, which is shown 
first by Katz \cite{Kz} in the case of Fuchsian systems of the first order 
and by the author \cite{Ow} in the case of single differential equations of higher orders.
Here the equation is called rigid if it is free from accessory parameters, namely, the equation 
is globally determined by the local structure at the singular points. 
Applying these transformations to linear ordinary differential equations on $\mathbb P^1$, we study
many fundamental problems on their solutions in \cite{Ow}.

The rigid Fuchsian ordinary differential equation 
on $\mathbb P^1$ can be extended to a Knizhnik-Zamolodchikov type equation (KZ equation in short, 
cf.~\cite{KZ}) regarding the singular points as new variables.  
Haraoka \cite{Ha} shows this by extending middle convolutions on KZ equations and its  
generalization for equations with irregular singularities is given by the author \cite{Ojm,Ov}.
Then these transformations are also useful to 
hypergeometric functions with several variables including Appell's hypergeometric functions (cf.~\cite{Okz}).

These transformations do not give an integral representation of Appell's hypergeometric series 
$F_4$ but K.~Aomoto gives an integral representation of $F_4$, which is written in 
\cite[\S13.10.2]{Ow}.  We define integral transformations on the space of convergent power series 
of several variables in \S2 and study hypergeometric functions with several variables, which 
extends a brief study of Appell's hypergeometric functions in \cite[\S3.10]{Ow}.
The transformations are invertible and they are generalizations of Riemann-Liouville 
integrals of functions with a single variable.
On the space of hypergeometric functions with several variables we have important 
transformations such as the multiplications of suitable functions and coordinate transformations.
Note that a holonomic Fuchsian differential equation of several variables has solutions of 
convergent power series times simple functions $x_1^{\lambda_1}\cdots x_n^{\lambda_n}$
at its normally crossing singular points (cf.~\cite{KO}). 
We study a combination of the integral transformations and multiplications by these 
simple functions.

In \S3 we show that the transformations defined in \S2 give integral representations of Appell's 
or Lauricella's  hypergeometric series and certain Horn's hypergeometric series with irregular 
singularities.

In \S4 we show that our study gives a result related to the connection problem on the solutions,
which will be discussed in another paper and related to the study by Matsubara \cite[\S3]{Ma}. 

In \S5 a combination of the integral transformations with coordinate 
transformations defined by products of powers of coordinate functions parametrized by 
$GL(n,\mathbb Z)$. The transformations given in \S5 are related to $A$-hypergeometric
series introduced by Gel'fand, Kpranov and Zelevinsky \cite{GKZ}. The transformation
\[
  \sum_{m=0}^\infty\sum_{n=0}^\infty c_{m,n}x^my^n\mapsto
   \sum_{m=0}^\infty\sum_{n=0}^\infty c_{m,n}\frac{(\alpha)_{p_1m+q_1n}(\beta)_{p_2m+q_2n}}
  {(\gamma)_{(p_1+p_2)_m+(q_1+q_2)_n}}x^my^n
\]
of convergent power series is an example.  Here
$p_1$, $p_2$, $q_1$ and $q_2$ are non-negative integers with 
$\left(\begin{smallmatrix}p_1&p_2\\ q_1&q_2\end{smallmatrix}\right)\in GL(2,\mathbb Z)$
and we put $(a)_k=a(a+1)\cdots (a+k-1)$. 

In \S6 we study the transformation of differential equations corresponding to the transformations
of their solutions.

In \S7 we study the transformation given in \S5 which keeps the space of KZ equations 
\begin{align*}
  \mathcal M : \begin{cases}
   \displaystyle\frac{\p u}{\p x}=
   \frac{A_{01}}{x-y}u+\frac{A_{02}}{x-1} u + \frac{A_{03}}{x} u,\\
   \displaystyle\frac{\p u}{\p y}=
   \frac{A_{01}}{y-x}u+\frac{A_{12}}{y-1} u + \frac{A_{13}}{y} u
  \end{cases}
\end{align*}
and give the induced transformations of the residue matrices $A_{i,j}$ defining the equations. 
The transformation is reduced to
a coordinate transformation corresponding to the coordinate symmetries described in 
\cite[\S6]{Okz} and a middle convolution of the KZ equation. 
Hence we apply the result in \cite{Ha,Okz} to them.
The hypergeometric series 
\begin{equation*}
\sum_{m=0}^\infty\sum_{n=0}^\infty
  \frac{\prod_{i=1}^p (\alpha_i)_{m}\prod_{j=1}^q (\beta_j)_{n}\prod_{k=1}^r (\gamma_k)_{m+n}}
  {\prod_{i=1}^{p} (1-\alpha'_i)_{m}\prod_{j=1}^{q} (1-\beta'_j)_n\prod _{k=1}^{r}
   (1-\gamma'_k)_{m+n}}x^my^n
\end{equation*}
with $\alpha'_1=\beta'_1=0$ is a typical example satisfying  a KZ equation, which is a 
generalization of Appell's $F_1$.

To the KZ equation we show Theorem \ref{thm:KZ2} which gives
an interesting correspondence between \emph{simple} solutions along a
line (cf.~Definition~\ref{def:simple}) and \emph{simple} solutions at a singular point where 
three singular lines meet.

In \S8 we restrict our transformations in \S7 to certain ordinary differential 
equations of Shlesinger canonical form. 
The transformation may be interesting since it may change the index of rigidity defined by 
\cite{Kz}.

Several applications of the results in this paper will be given in other papers.

\section{Integral transformations}\label{sec:Int}
The Riemann-Liouville transform $I_c^\mu \phi$ of a function $\phi(x)$ is defined by
\begin{equation}\label{eq:RL}
 (I_{x,c}^\mu\phi)(x)=(I_c^\mu\phi)(x):=\frac1{\Gamma(\mu)}\int_c^x\phi(t)(x-t)^{\mu-1}\D t.
\end{equation}
Here $c$ is usually a singular point of $\phi(x)$. 
Since
\begin{align*}
 \int_0^x\phi(t)(x-t)^{\mu-1}\D t
  &=\int_0^1\phi(xs)(x-xs)^{\mu-1}x\D s\qquad(t=xs)\\
  &=x^\mu\int_0^1\phi(sx)(1-s)^{\mu-1}\D s,
\end{align*}
the transformation
\begin{equation}
 (K_x^\mu \phi)(x):=\frac1{\Gamma(\mu)}\int_0^1\phi(tx)(1-t)^{\mu-1}\D t
\end{equation}
of a function $\phi(x)$ satisfies
\begin{align}
  K_x^\mu &= x^{-\mu}I_0^\mu,\label{eq:K2I}\\
  K_x^\mu x^\alpha&=\frac{\Gamma(\alpha+1)}{\Gamma(\alpha+\mu+1)}x^\alpha.
\end{align}
\begin{definition}
We extend the integral transform $K_x^\mu$ to a function $\phi(x)$ of several variables 
$x=(x_1,\dots,x_n)$ by 
\begin{equation*}
 K_x^\mu \phi(x):=\frac1{\Gamma(\mu)}\int_{\substack{t_1>0,\dots,t_n>0\\t_1+\cdots+t_n<1}}
  (1-t_1-\dots-t_n)^{\mu-1} \phi(t_1x_1,\dots,t_nx_n)\D t_1\dots \D t_n.
\end{equation*}
\end{definition}
Note that 
\begin{align*}
\int_0^{1-s}t^{\alpha}(1-s-t)^{\mu-1}\D t&=\int_0^{1-s}t^{\alpha}(1-s)^{\mu-1}(1-\tfrac t{1-s})^{\mu-1}\D t\allowdisplaybreaks\\
&=(1-s)^{\alpha+\mu}\int_0^1 t^{\alpha}(1-t)^{\mu-1}\D t\\
&=\frac{\Gamma(\alpha+1)\Gamma(\mu)}{\Gamma(\alpha+\mu+1)}(1-s)^{\alpha+\mu}
\end{align*}
and hence
\begin{align*}
&\int_{\substack{t_1>0,\dots,t_n>0\\t_1+\cdots+t_n<1}}t^{\alpha_1}\cdots t^{\alpha_n}
(1-t_1-\cdots-t_n)^{\mu-1}\D t\\
&\ \ =\int_0^1\! t_1^{\alpha_1}\D t_1\int_0^{1-t_1}\!t_2^{\alpha_2}\D t_2\cdots\!\int_0^{1-t_1-\cdots-t_{n-1}}\! t_n^{\alpha_n}(1-t_1-\cdots-t_n)^{\mu-1}\D t_n\\
&\ \ =\frac{\Gamma(\mu)\Gamma(\alpha_n+1)}{\Gamma(\alpha_n+\mu+1)}\int_0^1\!\! t_1^{\alpha_1}\D t_1
\cdots\!\int_0^{1-t_1-\cdots-t_{n-2}}\!\! t_{n-1}^{\alpha_{n-1}}(1-t_1-\cdots-t_{n-1})^{\alpha_n+\mu}\D t_{n-1}
\\&\ \ 
 =\frac{\Gamma(\mu)\Gamma(\alpha_n+1)}{\Gamma(\alpha_n+\mu+1)}\times
  \frac{\Gamma(\alpha_n+\mu+1)\Gamma(\alpha_{n-1}+1)}{\Gamma(\alpha_{n-1}+\alpha_n+\mu+2)}
  \times\cdots \\
&\qquad\cdots\times
  \frac{\Gamma(\alpha_2+\cdots+\alpha_n+\mu+n-1)\Gamma(\alpha_1+1)}
  {\Gamma(\alpha_1+\cdots+\alpha_n+\mu+n)}\\
&\ \ =\frac{\Gamma(\mu)\Gamma(\alpha_1+1)\cdots\Gamma(\alpha_n+1)}
  {\Gamma(\alpha_1+\cdots+\alpha_n+\mu+n)}.
\end{align*}
Therefore we have
\begin{equation}\label{eq:Kp}
 K_x^\mu x^{\boldsymbol \alpha} = \frac{\Gamma(\boldsymbol\alpha+1)}
  {\Gamma(|\boldsymbol\alpha+1|+\mu)}x^{\boldsymbol \alpha}
\end{equation}
and
\begin{equation}\label{eq:Ks}\\
\begin{split}
 &K_x^\mu \phi(x_1)x_2^{\alpha_2-1}\cdots x_n^{\alpha_n-1}\\
 &\quad = 
\frac{\Gamma(\alpha_2)\cdots\Gamma(\alpha_n)}{\Gamma(\alpha_2+\cdots+\alpha_n+\mu)}
(I_0^{\alpha_2+\cdots+\alpha_n+\mu}\phi)(x_1)\cdot x_2^{\alpha_2-1}\cdots x_n^{\alpha_n-1}
\end{split}
\end{equation}
Here and hereafter we use the notation
\begin{equation}
\begin{split}
  \mathbb N&=\{0,1,2,,\ldots\},\\
 \mathbf m&\ge 0\ \Leftrightarrow \ m_1\ge0,\ldots,m_n\ge0,\\
 |\boldsymbol\alpha|&=\alpha_1+\cdots+\alpha_n,\ 
 \boldsymbol\alpha+c=(\alpha_1+c,\dots,\alpha_n+c),\ \mathbf m!=m_1!\cdots m_n!,
 \\
 x^{\boldsymbol \alpha}&=\mathbf x^{\boldsymbol \alpha}=x_1^{\alpha_1}\cdots x_n^{\alpha_n},\ 
 (c-\mathbf x)^{\boldsymbol \alpha}=(c-x_1)^{\alpha_1}\cdots (c-x_n)^{\alpha_n},\\
 \Gamma(\boldsymbol \alpha)&=\Gamma(\alpha_1)\cdots\Gamma(\alpha_n),\ 
 (\boldsymbol \alpha)_{\mathbf m}=\frac{\Gamma(\boldsymbol \alpha+\mathbf m)}
  {\Gamma(\boldsymbol \alpha)}
\end{split}\label{notation}
\end{equation}
for $\boldsymbol \alpha=(\alpha_1,\dots,\alpha_n)\in\mathbb C^n$, 
$\mathbf m=(m_1,\dots,m_n)\in\mathbb N^n$ and $x=(x_1,\dots,x_n)$.

The arguments in this section are valid when $\RE\alpha_1>0,\dots,\RE\alpha_n>0$ and $\RE\mu>0$ 
but the right hand side of \eqref{eq:Kp} is meromorphic for $\boldsymbol\alpha$ and $\mu$ and 
we define $K_x^\mu x^{\boldsymbol \alpha}$ by the analytic continuation with respect to these parameters.

We will define the inverse $L_x^\mu$ of $K_x^\mu$. 
Suppose $0\le \RE s<1$ and $0<c<1-\RE s$. Then
\begin{align*}
 &\int_{c-i\infty}^{c+i\infty}t^{-\alpha} (1-s-t)^{-\tau}\tfrac{\D t}t=
  (1-s)^{-\tau}\int_{c-i\infty}^{c+i\infty}t^{-\alpha}(1-\tfrac t{1-s})^{-\tau}\tfrac{\D t}t
  \phantom{AAAAAAA}
\\
 &\quad=(1-s)^{-\alpha-\tau}\int_{\tfrac{c-i\infty}{1-s}}^{\tfrac{c+i\infty}{1-s}}t^{-\alpha}(1-t)^{-\tau}\tfrac{\D t}t\\[-15mm]
 &\quad=(1-s)^{-\alpha-\tau}\int_{c-i\infty}^{c+i\infty}t^{-\alpha}(1- t)^{-\tau}\tfrac{\D t}t
\ \ \ 
\if0
\raisebox{-4mm}{\makebox[0pt][l]{\scalebox{0.95}{\begin{tikzpicture}
\draw (2.5,0.3)--(0,0.3) .. controls (-0.166,0.3) and (-0.3,0.166) .. (-0.3,0)
 .. controls (-0.3,-0.166) and (-0.166,-0.3) .. (0,-0.3)--(2.5,-0.3)
(0,0)--(2.5,0);
\draw (2.359,0.159) -- (2.5,0.3) -- (2.359,0.441);
\draw[dotted] (-2,0)--(0,0);
\draw (-0.5,-1)--(-0.5,1) (-0.359,0.859) -- (-0.5,1) -- (-0.641,0.859);
\node at (0,0) {$\bullet$};
\node at (0,-0.18) {\scriptsize $1$};
\node at (2.8,0) {\scriptsize$+\infty$};
\node at (-0.5,0) {$\bullet$};
\node at (-0.65,-0.18) {\scriptsize $c$};
\node at (-1,0) {$\bullet$};
\node at (-1,-0.18) {\scriptsize $0$};
\node at (-0.5,1.15) {\scriptsize $c{+}i\infty$};
\end{tikzpicture}}}}\\[-1mm]
\fi
\raisebox{-4mm}{\makebox[0pt][l]{\scalebox{0.95}{\begin{tikzpicture}
\draw (2.5,0.3)--(0.5,0.3) .. controls (0.334,0.3) and (0.2,0.166) .. (0.2,0)
 .. controls (0.2,-0.166) and (0.344,-0.3) .. (0.5,-0.3)--(2.5,-0.3)
(0.5,0)--(2.5,0);
\draw (2.359,0.159) -- (2.5,0.3) -- (2.359,0.441);
\draw[dotted] (-2,0)--(0.5,0);
\draw (-0.5,-1)--(-0.5,1) (-0.359,0.859) -- (-0.5,1) -- (-0.641,0.859);
\draw (-1.1,-1)--(0.5,0.9) (0.517,0.701) -- (0.5,0.9) -- (0.301,0.883);
\node at (0.5,0) {$\bullet$};
\node at (0.5,-0.18) {\scriptsize $1$};
\node at (2.8,0) {\scriptsize$+\infty$};
\node at (-0.5,0) {$\bullet$};
\node at (-0.65,-0.18) {\scriptsize $c$};
\node at (-1,0) {$\bullet$};
\node at (-1,-0.18) {\scriptsize $0$};
\node at (-0.5,1.15) {\scriptsize $c{+}i\infty$};
\node at (0.9,1) {\scriptsize$\frac{c{+}i\infty}{1-s}$};
\end{tikzpicture}}}}\\[-1mm]
&\quad=(1-s)^{-\alpha-\tau}(-e^{-\tau\pi i}+e^{\tau\pi i})\int_1^\infty t^{-\alpha}(t-1)^{-\tau}\tfrac{\D t}t\\
&\quad=(1-s)^{-\alpha-\tau}\cdot 2 i \sin\tau\pi 
 \int_0^1 (\tfrac1u)^{-\alpha}(\tfrac1u-1)^{-\tau}\tfrac{\D u}u\quad(u=\tfrac1t)\\
&\quad=\frac{2\pi i(1-s)^{-\alpha-\tau}}{\Gamma(\tau)\Gamma(1-\tau)}
 \int_0^1 u^{\alpha+\tau-1}(1-u)^{-\tau}\D u\\
 &\quad=2\pi i\frac{\Gamma(\alpha+\tau)}{\Gamma(\tau)\Gamma(\alpha+1)}(1-s)^{-\alpha-\tau}.
\end{align*}
Here the path of the integration of the above first line is $(-\infty,\infty)\ni s\mapsto c+is$ and 
we also use the path \ \ \raisebox{-3mm}{\scalebox{0.9}{\begin{tikzpicture}
\draw (2.5,0.3)--(0,0.3) .. controls (-0.166,0.3) and (-0.3,0.166) .. (-0.3,0)
 .. controls (-0.3,-0.166) and (-0.166,-0.3) .. (0,-0.3)--(2.5,-0.3)
(0,0)--(2.5,0);
\draw (2.359,0.159) -- (2.5,0.3) -- (2.359,0.441);
\node at (0,0) {$\bullet$};
\node at (0,-0.18) {\scriptsize $1$};
\node at (2.8,0) {\scriptsize$+\infty$};
\end{tikzpicture}}}\ \ 
of the integration in the above.
\if0
\begin{align*}
\frac{\Gamma(\mu+1)}{2\pi i}\int_{c-\infty i}^{c+\infty i}
  (\tfrac xt)^{\alpha}(1-t)^{-\mu-1}\tfrac{\D t}t
&=\frac{\Gamma(\alpha+\mu+1)}{\Gamma(\alpha+1)}x^\alpha
\end{align*}
\fi

Thus we have 
\begin{align*}
&\int_{\tfrac1{n+1}-i\infty}^{\tfrac1{n+1}+i\infty}\!\cdots\!
\int_{\tfrac1{n+1}-i\infty}^{\tfrac1{n+1}+i\infty}
t^{-\boldsymbol\alpha}(1-t_1-\cdots-t_n)^{-\tau}\tfrac{\D t_1}{t_1}\cdots\tfrac{\D t_n}{t_n}\\
&=(2\pi i)^n\frac{\Gamma(\alpha_n+\tau)}{\Gamma(\tau)\Gamma(\alpha_n+1)}
    \frac{\Gamma(\alpha_{n-1}+\alpha_n+\tau)}{\Gamma(\alpha_n+\tau)\Gamma(\alpha_{n-1}+1)}
\cdots \frac{\Gamma(|\boldsymbol\alpha|+\tau)}
  {\Gamma(\alpha_2+\cdots+\alpha_n+\tau)\Gamma(\alpha_1+1)}\\
&=(2\pi i)^n\
 \frac{\Gamma(|\boldsymbol\alpha+1|+\tau-n)}{\Gamma(\boldsymbol\alpha+1)\Gamma(\tau)}.
\end{align*}
\begin{definition}
We define the transformation 
\begin{equation*}
\begin{split}
 (L_x^\mu \phi)(x)&:=\frac{\Gamma(\mu+n)}{(2\pi i)^n}\int_{\tfrac1{n+1}-i\infty}^{\tfrac1{n+1}+i\infty}\!\cdots\!
\int_{\tfrac1{n+1}-i\infty}^{\tfrac1{n+1}+i\infty}\!
\phi(\tfrac{x_1}{t_1},\dots,\tfrac{x_n}{t_n})
(1-|\mathbf t|)^{-\mu-n}\tfrac{\D t_1}{t_1}\cdots\tfrac{\D t_n}{t_n}\\
 &=\frac{\Gamma(\mu+n)}{(2\pi i)^n}\int_{\bigl|\tfrac{2s_1}{n+1}-1\bigr|=1}\!\cdots\!
   \int_{\bigl|\tfrac{2s_n}{n+1}-1\bigr|=1}\!\phi(s_1x_1,\dots,s_nx_n)\\
 &\qquad\times
  (1-\tfrac1{s_1}-\cdots-\tfrac1{s_n})^{-\mu-n}\tfrac{\D s_1}{s_1}\cdots\tfrac{\D s_n}{s_n}.
\end{split}
\end{equation*}
\end{definition}
Then we have 
\begin{equation}\label{eq:Lx0}
  L_x^\mu x^{\boldsymbol\alpha}=
 \frac{\Gamma(|\boldsymbol\alpha+1|+\mu)}{\Gamma(\boldsymbol\alpha+1 )}
  x^{\boldsymbol\alpha}.
\end{equation}

When $n=1$, we have
\begin{align*}
 (L_x^{\mu}\phi)(x)&=\frac{\Gamma(\mu+1)}{2\pi i}\int_{\tfrac12-i\infty}^{\tfrac12+i\infty}
  \phi(\tfrac xt)(1-t)^{-\mu-1}\tfrac{\D t}t\\
  &=\frac{\sin(\mu+1)\pi\cdot\Gamma(\mu+1)}{\pi}
   \int_1^\infty\phi(\tfrac xt)(1-t)^{-\mu-1}\tfrac{\D t}t\\
  &=\frac1{\Gamma(-\mu)}\int_0^1 s^{\mu}\phi(s)(x-s)^{-\mu-1}\D s\\
  &=I_0^{-\mu} (x^\mu\phi).
\end{align*}
In general, we have
\begin{equation}\label{eq:Ls}
\begin{split}
 &L_x^{\mu}(\phi(x_1)x_2^{\alpha_2-1}\cdots x_n^{\alpha_n-1})=\frac{\Gamma(\alpha_2+\cdots+\alpha_n+\mu)}{\Gamma(\alpha_2)\cdots\Gamma(\alpha_n)}\\
 &\quad\times (I_{x_1,0}^{-\alpha_2-\cdots-\alpha_n-\mu}x_1^{\alpha_2+\cdots+\alpha_n+\mu}\phi(x_1)) x_2^{\alpha_2-1}\cdots x_n^{\alpha_n-1}.
\end{split}
\end{equation}
\begin{definition}
We define two transformations
\begin{align}
 K_x^{\mu,\boldsymbol\lambda}:=x^{1-\boldsymbol\lambda}K_x^\mu x^{\boldsymbol\lambda-1}
  \text{ \ and \ }
 L_x^{\mu,\boldsymbol\lambda}:=x^{1-\boldsymbol\lambda}L_x^\mu x^{\boldsymbol\lambda-1}
\end{align}
which act on the ring $\mathcal O_0$ of convergent power series of $x=(x_1,\dots,x_n)$.
\end{definition}
We have
\begin{align*}
 K_x^{\mu,\boldsymbol\lambda}x^{\boldsymbol\alpha}
  =\frac{\Gamma(\boldsymbol\lambda+\boldsymbol\alpha)}{\Gamma(|\boldsymbol\lambda+\boldsymbol\alpha|+\mu)}x^{\boldsymbol\alpha}\text{ \ and \ }
 L_x^{\mu,\boldsymbol\lambda}x^{\boldsymbol\alpha}
  =\frac{\Gamma(|\boldsymbol\lambda+\boldsymbol\alpha|+\mu)}{\Gamma(\boldsymbol\lambda+\boldsymbol\alpha)}x^{\boldsymbol\alpha}.
\end{align*}
\begin{theorem}\label{thm:KL}
Putting
\begin{align*}
  u(x)&
  =\sum_{\mathbf m\ge0}c_{\mathbf m}x^{\mathbf m}=
   \sum_{m_1=0}^\infty\cdots \sum_{m_n=0}^\infty c_{\mathbf m}x^{\mathbf m}
   \in\mathcal O_0\quad(c_{\mathbf m}\in\mathbb C).\\
  K_x^{\mu,\boldsymbol\lambda}u(x)&=\sum_{\mathbf m}c_{\mathbf m}^K x^{\mathbf m}\text{ \ and \ } 
  L_x^{\mu,\boldsymbol\lambda}u(x)=\sum_{\mathbf m}c_{\mathbf m}^L x^{\mathbf m}\quad
 (c^K_{\mathbf m},\ c^L_{\mathbf m}\in\mathbb C), 
\end{align*} 
we have
\begin{align}
 c_{\mathbf m}^K
  &=\frac{\Gamma(\boldsymbol\lambda)}{\Gamma(|\boldsymbol\lambda|+\mu)}
  \frac{(\boldsymbol\lambda)_{\mathbf m}}{(|\boldsymbol\lambda|+\mu)_{|\mathbf m|}}c_{\mathbf m},\\
 c_{\mathbf m}^L
  &=\frac{\Gamma(|\boldsymbol\lambda|+\mu)}{\Gamma(\boldsymbol\lambda)}
  \frac{(|\boldsymbol\lambda|+\mu)_{|\mathbf m|}}{(\boldsymbol\lambda)_{\mathbf m}}c_{\mathbf m}. 
\end{align}
\end{theorem}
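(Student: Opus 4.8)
The plan is to reduce everything to the monomial formulas
$K_x^{\mu,\boldsymbol\lambda}x^{\boldsymbol\alpha}=\frac{\Gamma(\boldsymbol\lambda+\boldsymbol\alpha)}{\Gamma(|\boldsymbol\lambda+\boldsymbol\alpha|+\mu)}x^{\boldsymbol\alpha}$ and $L_x^{\mu,\boldsymbol\lambda}x^{\boldsymbol\alpha}=\frac{\Gamma(|\boldsymbol\lambda+\boldsymbol\alpha|+\mu)}{\Gamma(\boldsymbol\lambda+\boldsymbol\alpha)}x^{\boldsymbol\alpha}$ established just above, applied term by term. Specializing $\boldsymbol\alpha=\mathbf m\in\mathbb N^n$ and using $\Gamma(\boldsymbol\lambda+\mathbf m)=(\boldsymbol\lambda)_{\mathbf m}\Gamma(\boldsymbol\lambda)$ together with $\Gamma(|\boldsymbol\lambda|+\mu+|\mathbf m|)=(|\boldsymbol\lambda|+\mu)_{|\mathbf m|}\Gamma(|\boldsymbol\lambda|+\mu)$ turns the Gamma quotients into the asserted ratios of Pochhammer symbols. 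Hence the whole content of the theorem is that $K_x^{\mu,\boldsymbol\lambda}$ and $L_x^{\mu,\boldsymbol\lambda}$ may be applied to the convergent series $u$ coefficient by coefficient, and that the resulting series is again an element of $\mathcal O_0$.

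For $K_x^{\mu,\boldsymbol\lambda}$ this is routine. Let $u$ converge on the polydisc $|x_i|<r_i$ and fix $\rho_i<r_i$. For $|x_i|\le\rho_i$ and $(t_1,\dots,t_n)$ in the open simplex $\{t_i>0,\ |\mathbf t|<1\}$ the point $(t_1x_1,\dots,t_nx_n)$ lies in the polydisc $|y_i|\le\rho_i$, so $\sum_{\mathbf m}|c_{\mathbf m}|\,|t_1x_1|^{m_1}\cdots|t_nx_n|^{m_n}$ is bounded there by a constant; since $(1-|\mathbf t|)^{\RE\mu-1}$ is integrable on the simplex for $\RE\mu>0$, Fubini's theorem permits interchanging the sum with the integral defining $K_x^\mu$, and the conjugation by $x^{\boldsymbol\lambda-1}$ merely multiplies each monomial by a constant. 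Thus $K_x^{\mu,\boldsymbol\lambda}u=\sum_{\mathbf m}c_{\mathbf m}K_x^{\mu,\boldsymbol\lambda}x^{\mathbf m}=\sum_{\mathbf m}c^K_{\mathbf m}x^{\mathbf m}$, and since by Stirling $\Gamma(\boldsymbol\lambda+\mathbf m)/\Gamma(|\boldsymbol\lambda+\mathbf m|+\mu)$ is bounded by a polynomial in $|\mathbf m|$ (the relevant ratio of factorials being $\le\mathbf m!/|\mathbf m|!\le1$), the new series converges at least on $|x_i|<r_i$. The case of general meromorphic $\mu,\boldsymbol\lambda$ then follows by analytic continuation, exactly as for \eqref{eq:Kp}.

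For $L_x^{\mu,\boldsymbol\lambda}$ the contour is the product of the vertical lines $\RE t_i=\frac1{n+1}$, which is non-compact, so one first works on a small polydisc: assume $(n+1)\rho_i<r_i$. Then $|t_i|\ge\frac1{n+1}$ on the contour gives $|x_i/t_i|\le(n+1)\rho_i<r_i$, so $\sum_{\mathbf m}|c_{\mathbf m}|\prod_i|x_i/t_i|^{m_i}$ is bounded by a constant $M$ uniformly along the contour; moreover $(1-|\mathbf t|)^{-\mu-n}\,\mathrm{d}t_1/t_1\cdots\mathrm{d}t_n/t_n$ is absolutely integrable over the contour, the decay of $(1-|\mathbf t|)^{-\mu-n}$ as the $t_i\to\infty$ along their lines compensating the factors $1/t_i$ (this is precisely what makes convergent the integral computed just before the definition of $L_x^\mu$). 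Dominated convergence then yields $L_x^{\mu,\boldsymbol\lambda}u=\sum_{\mathbf m}c_{\mathbf m}L_x^{\mu,\boldsymbol\lambda}x^{\mathbf m}=\sum_{\mathbf m}c^L_{\mathbf m}x^{\mathbf m}$ for $|x_i|\le\rho_i$. Finally, to see that $\sum c^L_{\mathbf m}x^{\mathbf m}\in\mathcal O_0$, so that this identity of analytic germs propagates to the full common domain of convergence, observe that by Stirling $(|\boldsymbol\lambda|+\mu)_{|\mathbf m|}/(\boldsymbol\lambda)_{\mathbf m}$ equals, up to a factor polynomial in $|\mathbf m|$, at most the multinomial coefficient $|\mathbf m|!/\mathbf m!\le n^{|\mathbf m|}$, so the radius of convergence is diminished by at most the factor $n$ and stays positive. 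The one genuinely delicate point is this last interchange over a non-compact contour for $L$: fixing the range of $\mu$ in which the kernel is integrable and exhibiting the dominating function $M\,|(1-|\mathbf t|)^{-\mu-n}|\prod_i|t_i|^{-1}$; the rest is bookkeeping with the Gamma function.
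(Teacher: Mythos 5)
Your proposal is correct and follows the same route the paper intends: the theorem is an immediate consequence of the displayed monomial formulas $K_x^{\mu,\boldsymbol\lambda}x^{\boldsymbol\alpha}$ and $L_x^{\mu,\boldsymbol\lambda}x^{\boldsymbol\alpha}$ applied term by term, rewritten via $\Gamma(\boldsymbol\lambda+\mathbf m)=(\boldsymbol\lambda)_{\mathbf m}\Gamma(\boldsymbol\lambda)$ and $\Gamma(|\boldsymbol\lambda|+\mu+|\mathbf m|)=(|\boldsymbol\lambda|+\mu)_{|\mathbf m|}\Gamma(|\boldsymbol\lambda|+\mu)$, which is exactly what you do. The convergence and Fubini justifications you supply (polynomial growth of the $K$-multipliers, the $n^{|\mathbf m|}$ bound for $L$, integrability of the kernels for $\RE\mu>0$ followed by analytic continuation in the parameters) are sound and simply make explicit what the paper leaves implicit.
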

By the analytic continuation with respect to the parameters the transformations 
$K_x^{\mu,\boldsymbol \lambda}$ and $L_x^{\mu,\boldsymbol \lambda}$ are well-defined if
\begin{align}
 \lambda_\nu&\notin \mathbb Z_{\le 0}\quad(\nu=1,\dots,n)\label{eq:Kd}
\intertext{and}
 |\boldsymbol\lambda|+\mu&\notin \mathbb Z_{\le 0},\label{eq:Ld}
\end{align}
respectively.
Moreover if \eqref{eq:Kd} and \eqref{eq:Ld} are valid, $K_x^{\mu,\boldsymbol \lambda}$ and $L_x^{\mu,\boldsymbol \lambda}$ are bijective on $\mathcal O_0$ and the map $K_x^{\mu,\boldsymbol \lambda}\circ L_x^{\mu,\boldsymbol \lambda}$ is the identity map.

\begin{proposition}\label{prop:to1}
By the equations \eqref{eq:Ks} and \eqref{eq:Ls} we have
\begin{align}
 K_x^{\mu,\boldsymbol\lambda}\phi(x_1)&=
  \frac{\Gamma(\lambda_2)\cdots\Gamma(\lambda_n)}{\Gamma(\lambda_2+\cdots+\lambda_n+\mu)}
  x_1^{-|\boldsymbol\lambda|-\mu+1}
  I_{x_1,0}^{|\boldsymbol\lambda|-\lambda_1+\mu}x_1^{\lambda_1-1}\phi(x_1),\\
  L_x^{\mu,\boldsymbol\lambda}\phi(x_1)&=
  \frac{\Gamma(\lambda_2+\cdots+\lambda_n+\mu)}{\Gamma(\lambda_2)\cdots\Gamma(\lambda_n)}
  x_1^{1-\lambda_1}I_{x_1,0}^{-|\boldsymbol\lambda|+\lambda_1-\mu}
  x_1^{|\boldsymbol\lambda|+\mu-1}\phi(x_1). 
\end{align}
\end{proposition}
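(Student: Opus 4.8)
The plan is to unwind the definition $K_x^{\mu,\boldsymbol\lambda}=x^{1-\boldsymbol\lambda}K_x^\mu x^{\boldsymbol\lambda-1}$ and read off the result from \eqref{eq:Ks}, and likewise for $L_x^{\mu,\boldsymbol\lambda}$ from \eqref{eq:Ls}. The point is that when $\phi$ depends on $x_1$ alone,
\[
 x^{\boldsymbol\lambda-1}\phi(x_1)=\bigl(x_1^{\lambda_1-1}\phi(x_1)\bigr)\,x_2^{\lambda_2-1}\cdots x_n^{\lambda_n-1}
\]
is exactly of the form to which \eqref{eq:Ks} applies, with the function of the first variable there taken to be $x_1^{\lambda_1-1}\phi(x_1)$ and the exponents $\alpha_\nu=\lambda_\nu$ $(\nu=2,\dots,n)$. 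So I would first substitute this into $K_x^{\mu,\boldsymbol\lambda}\phi(x_1)=x^{1-\boldsymbol\lambda}K_x^\mu\bigl(x^{\boldsymbol\lambda-1}\phi(x_1)\bigr)$ and apply \eqref{eq:Ks}; the output is $I_{x_1,0}^{\lambda_2+\cdots+\lambda_n+\mu}x_1^{\lambda_1-1}\phi(x_1)$ times the same monomial $x_2^{\lambda_2-1}\cdots x_n^{\lambda_n-1}$, a power of $x_1$ supplied by the scaling relation \eqref{eq:K2I} between $K_{x_1}^\mu$ and $I_{x_1,0}^\mu$, and the $\Gamma$-quotient of \eqref{eq:Ks}.

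The second step is to left-multiply by $x^{1-\boldsymbol\lambda}=x_1^{1-\lambda_1}x_2^{1-\lambda_2}\cdots x_n^{1-\lambda_n}$. The auxiliary monomials $x_\nu^{\lambda_\nu-1}$ $(\nu\ge2)$ cancel completely against $x_\nu^{1-\lambda_\nu}$, leaving a function of $x_1$ only. The surviving powers of $x_1$ come from the factor $x_1^{1-\lambda_1}$ of the conjugation and the factor $x_1^{-(\lambda_2+\cdots+\lambda_n+\mu)}$ produced by \eqref{eq:K2I}; their product is $x_1^{1-\lambda_1-(\lambda_2+\cdots+\lambda_n+\mu)}=x_1^{1-|\boldsymbol\lambda|-\mu}$, whereas the factor $x_1^{\lambda_1-1}$ remains inside the Riemann--Liouville integral. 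Rewriting $\lambda_2+\cdots+\lambda_n=|\boldsymbol\lambda|-\lambda_1$ in the order of $I_{x_1,0}$ then gives precisely the asserted formula for $K_x^{\mu,\boldsymbol\lambda}\phi(x_1)$.

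The computation for $L_x^{\mu,\boldsymbol\lambda}$ is identical with $K_x^\mu$ and \eqref{eq:Ks} replaced by $L_x^\mu$ and \eqref{eq:Ls} (the role played above by \eqref{eq:K2I} is now played by the $n=1$ identity $L_x^\mu=I_0^{-\mu}\circ x^\mu$ obtained just before Proposition~\ref{prop:to1}): one writes $L_x^{\mu,\boldsymbol\lambda}\phi(x_1)=x^{1-\boldsymbol\lambda}L_x^\mu\bigl(x_1^{\lambda_1-1}\phi(x_1)\,x_2^{\lambda_2-1}\cdots x_n^{\lambda_n-1}\bigr)$, applies \eqref{eq:Ls}, cancels the monomials in $x_2,\dots,x_n$, and collects the powers of $x_1$ and the $\Gamma$-factors using $\lambda_2+\cdots+\lambda_n=|\boldsymbol\lambda|-\lambda_1$. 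As a consistency check, composing the two resulting formulas and using the semigroup property $I_{x_1,0}^{a}I_{x_1,0}^{b}=I_{x_1,0}^{a+b}$ with $a=|\boldsymbol\lambda|-\lambda_1+\mu$ and $b=-a$ recovers $K_x^{\mu,\boldsymbol\lambda}\circ L_x^{\mu,\boldsymbol\lambda}=\mathrm{id}$ on functions of $x_1$, in agreement with Theorem~\ref{thm:KL}.

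There is no serious obstacle here: the whole argument is the substitution plus the bookkeeping of the three separate powers of $x_1$ (the two $x_1^{\pm(\lambda_1-1)}$ introduced by the conjugation, and the $x_1^{-(\lambda_2+\cdots+\lambda_n+\mu)}$ coming from converting $K_{x_1}$ into $I_{x_1,0}$, respectively the analogous shift for $L$). The step most prone to error is precisely this exponent arithmetic, so I would carry it out explicitly rather than leave it to the reader.
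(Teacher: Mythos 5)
Your proposal is correct and is essentially the paper's own argument: the proposition is presented as a direct consequence of \eqref{eq:Ks} and \eqref{eq:Ls} applied to $x^{\boldsymbol\lambda-1}\phi(x_1)=\bigl(x_1^{\lambda_1-1}\phi(x_1)\bigr)x_2^{\lambda_2-1}\cdots x_n^{\lambda_n-1}$, followed by cancelling the monomials in $x_2,\dots,x_n$ against $x^{1-\boldsymbol\lambda}$ and collecting the powers of $x_1$. Your observation that the factor $x_1^{-(\lambda_2+\cdots+\lambda_n+\mu)}$ must be supplied by the scaling relation \eqref{eq:K2I} is the right reading of \eqref{eq:Ks} (whose right-hand side, taken literally, omits exactly this power of $x_1$, i.e.\ it should be understood via $K_{x_1}^{\nu}=x_1^{-\nu}I_{x_1,0}^{\nu}$ with $\nu=\lambda_2+\cdots+\lambda_n+\mu$), and it is this conversion that produces the exponent $1-|\boldsymbol\lambda|-\mu$ in the stated formula.
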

\section{Some hypergeometric functions}
Under the notation \eqref{notation} we note that 
\[
 (1-|\mathbf x|)^{-\lambda}=\sum_{\mathbf m\ge0}\frac{(\lambda)_{|\mathbf m|}}
 {\mathbf m!}x^{\mathbf m}\text{ \ and \ }e^{|\mathbf x|}=\sum_{\mathbf m\ge0}\frac{\mathbf x^{\mathbf m}}{\mathbf m!}.
\]
Lauricella's hypergeometric series (cf.~\cite{La,Er}) 
and their integral representations are given as follows
(cf.~Theorem~\ref{thm:KL}).
\begin{align}
\begin{split}
 F_A(\lambda_0,\boldsymbol \mu,\boldsymbol\lambda;\mathbf x)&:=
 \sum_{\mathbf m\ge0}\frac{(\lambda_0)_{|\mathbf m|}(\boldsymbol\mu)_{\mathbf m}}
  {(\boldsymbol\lambda)_{\mathbf m}\mathbf m!}\mathbf x^{\mathbf m}\\
 &=\frac{\Gamma(\boldsymbol\lambda)}{\Gamma(\boldsymbol\mu)}
  K_{x_1}^{\lambda_1-\mu_1,\mu_1}\cdots 
  K_{x_n}^{\lambda_n-\mu_n,\mu_n}
  (1-|\mathbf x|)^{-\lambda_0}\\
 &=\frac{\Gamma(\boldsymbol\lambda)}{\Gamma(\lambda_0)}
   L_x^{\lambda_0-|\boldsymbol\lambda|,\boldsymbol\lambda}(1-\mathbf x)^{-\boldsymbol\mu}
\end{split}
\label{eq:FA}
\allowdisplaybreaks\\
F_B(\boldsymbol\lambda,\boldsymbol\lambda',\mu;\mathbf x)&:=
 \sum_{\mathbf m\ge0}\frac{(\boldsymbol\lambda)_{\mathbf m}(\boldsymbol\lambda')_{\mathbf m}}
 {(\mu)_{|\mathbf m|}\mathbf m!}{\mathbf x}^m
 =\frac{\Gamma(\mu)}{\Gamma(\boldsymbol\lambda)}
 K_x^{\mu-|\boldsymbol \lambda|,\boldsymbol\lambda}(1-\mathbf x)^{-\boldsymbol\lambda'},
\label{eq:FB}
\allowdisplaybreaks
\\
F_C(\mu,\lambda_0,\boldsymbol\lambda;\mathbf x)
 &:=\sum_{\mathbf m\ge0} \frac{(\mu)_{|\mathbf m|}(\lambda_0)_{|\mathbf m|}}
  {(\boldsymbol\lambda)_{\mathbf m}\mathbf m!}{\mathbf x}^{\mathbf m}
  =\frac{\Gamma(\boldsymbol\lambda)}{\Gamma(\mu)}
   L_x^{\mu-|\boldsymbol\lambda|,\boldsymbol\lambda}(1-|\mathbf x|)^{-\lambda_0},
\label{eq:FC}
\allowdisplaybreaks\\
F_D(\lambda_0,\boldsymbol\lambda,\mu;\mathbf x)&:=
 \sum_{\mathbf m\ge0}\frac{(\lambda_0)_{|\mathbf m|}(\boldsymbol\lambda)_{\mathbf m}}
 {(\mu)_{|\mathbf m|}{\mathbf m}!}
  {\mathbf x}^{\mathbf m}
 =\frac{\Gamma(\mu)}{\Gamma(\boldsymbol\lambda)}
K_{x}^{\mu-|\boldsymbol \lambda|,\boldsymbol\lambda}(1-|\mathbf x|)^{-\lambda_0}.\label{eq:FD}
\end{align}

When $n=2$, namely, the number of variables equals 2, the functions $F_D$, $F_A$, $F_B$ and $F_C$ are Appell's hypergeometric series $F_1$, $F_2$, $F_3$ and $F_4$ (cf.~\cite{AK}), respectively.
Moreover we give examples of confluent Horn's series (cf.~\cite{Ho,Er}): 
\begin{align}
\begin{split}
 \Phi_2(\beta,\beta';\gamma;x,y)&:=\sum_{m=0}^\infty\sum_{n=0}^\infty
 \frac{(\beta)_m(\beta')_n}{(\gamma)_{m+n}m!n!}x^my^n
 \\&=
\frac{\Gamma(\gamma)}{\Gamma(\beta)\Gamma(\beta')}
 K_{x,y}^{\gamma-\beta-\beta',\beta,\beta'}e^{x+y},
\end{split}
\allowdisplaybreaks\\
\begin{split}
 \Psi_1(\alpha;\beta;\gamma,\gamma';x,y)&:=\sum_{m=0}^\infty\sum_{n=0}^\infty
 \frac{(\alpha)_{m+n}(\beta)_m}{(\gamma)_m(\gamma')_n m!n!}x^my^n
 \\&=
 \frac{\Gamma(\gamma)\Gamma(\gamma')}{\Gamma(\alpha)}
 L_{x,y}^{\alpha-\gamma-\gamma',\gamma,\gamma'}(1-x)^{-\beta}e^y,
\end{split}
\allowdisplaybreaks\\
\begin{split}
 \Psi_2(\alpha;\gamma',\gamma';x,y)&:=\sum_{m=0}^\infty\sum_{n=0}^\infty
 \frac{(\alpha)_{m+n}}{(\gamma)_m(\gamma')_n m!n!}x^my^n
 \\&=
 \frac{\Gamma(\gamma)\Gamma(\gamma')}{\Gamma(\alpha)}
 L_{x,y}^{\alpha-\gamma-\gamma',\gamma,\gamma'}e^{x+y}.
\end{split}
\end{align}

When $n=1$, \eqref{eq:FA}, \eqref{eq:FB}, \eqref{eq:FC} and \eqref{eq:FD} 
are reduced to an integral representation of Gauss hypergeometric series. 
In fact, putting $(\lambda_0,\boldsymbol\mu,\boldsymbol\lambda)=(\alpha,\beta,\gamma)$ in 
\eqref{eq:FA}, we have
\begin{equation}\label{eq:Gauss}
\begin{split}
 F(\alpha,\beta,\gamma;x)&=\sum_{m=0}^\infty\frac{(\alpha)_m(\beta)_m}{(\gamma)_mm!}x^m\\
 &=\frac{\Gamma(\gamma)}{\Gamma(\alpha)}K_x^{\gamma-\alpha,\alpha}(1-x)^{-\beta}\\
 &=
   \frac{\Gamma(\gamma)}{\Gamma(\alpha)\Gamma(\gamma-\alpha)}
   \int_0^1t^{\alpha-1}(1-t)^{\gamma-\alpha-1}(1-tx)^{-\beta}\D t
\end{split}
\end{equation}
and Kummer function is
\begin{align}
\begin{split}
 {}_1F_1(\alpha;\gamma;x)&:=\sum_{n=0}^\infty \frac{(\alpha)_n}{(\gamma)_n}\frac{x^n}{n!}\\
 &=\frac{\Gamma(\gamma)}{\Gamma(\alpha)}K_x^{\gamma-\alpha,\alpha}e^x
 =\frac{\Gamma(\gamma)}{\Gamma(\alpha)\Gamma(\gamma-\alpha)}\int_0^1t^{\alpha-1}(1-t)^{\gamma-\alpha-1}e^{tx}\D t.
\end{split}
\end{align}
\section{A connection problem}
Integral representations of hypergeometric functions are useful for the study of 
global structure of the functions.  Rigid linear ordinary differential equations
on the Riemann sphere with regular or unramified irregular irregularities are reduced
to the trivial equation by successive applications of middle convolutions and additions.
These transformations correspond to the transformations 
of their solutions defined by Riemann-Liouville 
integrals and multiplications by elementary functions such as $(x-c)^\lambda$ or 
$e^{r(x)}$ with rational functions $r(x)$ of $x$.

In \cite[Chapter 12]{Ow} and \cite{Ori}, analyzing the asymptotic behavior of the Riemann Liouville integral when the variable $x$ tends to a singular point of the function,  
we get the change of connection coefficients and 
Stokes coefficients under the integral transformations and 
finally such coefficients of the hypergeometric function we are interested in.

In this section a generalization of this way of study is shown in the case of several variables,
which will be explained by using $F_1$.
Since
\begin{align*}
 F_1(a,b,b',c;x,y)&=\frac{\Gamma(c)}{\Gamma(b)\Gamma(b')}K_{x,y}^{c-b-b',b,b'}(1-x-y)^{-a},
\intertext{Proposition~\ref{prop:to1} implies}
 F_1(a,b,b',c;x,0)&=\frac{\Gamma(c)}{\Gamma(b)}x^{1-c}I_0^{c-b}
  x^{b-1}(1-x)^{-a}.
\end{align*}
The equalities \eqref{eq:K2I} and \eqref{eq:Gauss} show  $F_1(a,b,b',c;x,0)=F(b,a,c;x)$
but first we do not use this fact. 

We pursue the changes of the Riemann scheme under the procedure given by 
Proposition~\ref{prop:to1} (cf.~\cite[Chapter~5]{Ow}).
They are the change when  
we apply $I_0^{c-b}$ to $x^{b-1}(1-x)^{-a}$ and the change when we  
multiply the resulting function by $\frac{\Gamma(c)}{\Gamma(b)}x^{1-c}$, which are
\begin{align}
&x^{b-1}(1-x)^{-a}:
\begin{Bmatrix}
 x=0&1&\infty\\
 \underline{b-1}&-a&a-b+1
\end{Bmatrix}\notag\\
&\qquad
\xrightarrow{I_0^{c-b}}
\begin{Bmatrix}
 x=0&1&\infty\\
 0 & 0 & b-c+1\\
 \underline{c-1}&c-a-b&a-c+1
\end{Bmatrix}\notag\\
&\qquad
\xrightarrow{\times\frac{\Gamma(c)}{\Gamma(b)}x^{1-c}}
\begin{Bmatrix}
 x=0&1&\infty\\
 1-c & 0 & b\\
 \underline{0}&c-a-b&a
\end{Bmatrix}.\label{eq:RSG}
\end{align}
Then  $F_1(a,b,b',c;x,0)$ is characterized as the holomorphic function in a neighborhood of 0
with the Riemann scheme \eqref{eq:RSG} and $F_1(a,b,b',c;0,0)=1$.

Since $F_1(a,b,b',c;x,y)$ satisfies a system of differential equations with singularities
$x=0,1,\infty$, $y=0,1,\infty$ and $x=y$ in $\mathbb P^1\times\mathbb P^1$, we have a connection
relation
\begin{align}\label{eq:F1c}
 F_1(a,b,b',c;x,y)=(-\tfrac1x)^a C_af_1^a(x,y)
    +(-\tfrac1x)^bC_bf_1^b(x,y)
\end{align}
in a neighborhood of $(-\infty,0)\times \{0\}$ in $\mathbb P^1\times\mathbb P^1$.
Here $f_1^a(x,y)$ and $f_1^b(x,y)$ are holomorphic in a neighborhood of $[-\infty,0)\times\{0\}$ and
$f_1^a(\infty,0)=f_1^b(\infty,0)=1$ and the connection coefficients $C_a$ and $C_b$ are
given by those of $F_1(a,b,b',c;x,0)$, namely,
\begin{align}
 C_a=\frac{\Gamma(c)\Gamma(b-a)}{\Gamma(b)\Gamma(c-a)}\text{\ \ and \ \ }
 C_b=\frac{\Gamma(c)\Gamma(a-b)}{\Gamma(a)\Gamma(c-b)}.
\end{align}
\begin{remark}
We note that the general expression  \cite[(0.25)]{Ow} of the connection coefficient
of Gauss hypergeometric function is simple and easy to be specialized (cf.~\cite{Ogauss}). 
Moreover the connection formula
\begin{align*}
 F(a,b,c;x)&=(-\tfrac1x)^aC_a F(a,a-c+1,a-b+1;\tfrac1x)\\
  &\quad{}+(-\tfrac1x)^bC_b F(b,b-c+1,b-a+1;\tfrac1x)
\end{align*}
follows from
{\small\begin{align*}
 \begin{Bmatrix}
 x=0 & 1 & \infty\\
 0 & 0 & a;&x\\
 1-c&c-a-b&b
 \end{Bmatrix}&= \begin{Bmatrix}
 x=0 & 1 & \infty\\
 a& 0 & 0;&\tfrac1x\\
 b& c-a-b & 1-c
 \end{Bmatrix}\\
 &=(-x)^a
 \begin{Bmatrix}
 x=0 & 1 & \infty\\
 0& 0 & a;&\tfrac1x\\
 1-(a-b+1)& c-a-b & a-c+1
 \end{Bmatrix}.
\end{align*}}
\end{remark}

We explicitly calculate \eqref{eq:F1c} in the following way.
\begin{align*}
&F_1(a,b,b',c;x,y)=\sum_{m=0}^\infty\sum_{n=0}^\infty
  \frac{(a)_{m+n}(b)_m(b')_n}{(c)_{m+n}m!n!}x^my^n
\allowdisplaybreaks\\
&\quad=\sum_{n=0}^\infty \frac{(a)_n(b')_n}{(c)_nn!}y^n
  \sum_{m=0}^\infty \frac{(a+n)_m(b)_m}{(c+n)_mm!}x^m
 =\sum_{n=0}^\infty \frac{(a)_n(b')_n}{(c)_nn!}y^nF(a+n,b,c+n;x)
\allowdisplaybreaks\\
&\quad=\sum_{n=0}^\infty \frac{(a)_n(b')_n}{(c)_nn!} \Bigl(\\
&\qquad\quad (-x)^{-a-n}
  \frac{\Gamma(c+n)\Gamma(b-a-n)}{\Gamma(b)\Gamma(c-a)} F(a+n,a-c+1,a-b+n+1;\tfrac1x)\\
 &\qquad{} +(-x)^{-b}
 \frac{\Gamma(c+n)\Gamma(a-b+n)}{\Gamma(a+n)\Gamma(c-b+n)}
  F(b,b-c-n+1,b-a-n+1;\tfrac1x)\Bigr)\\
&\quad=(-\tfrac1x)^aF_1^a+(-\tfrac1x)^bF_1^b,\\
&F_1^a=\sum\frac{\Gamma(c)(a)_n(b')_n\Gamma(b-a-n)(a+n)_m(a-c+1)_m}
 {\Gamma(b)\Gamma(c-a)(c)_n(a-b+n+1)_mm!n!}(\tfrac1x)^m(-\tfrac yx)^n
\allowdisplaybreaks\\
&\quad =
 \sum\frac{\Gamma(c)(a)_{m+n}(b')_n\Gamma(a-b+1)\Gamma(b-a)(a-c+1)_m}
 {\Gamma(b)\Gamma(c-a)\Gamma(a-b+n+1)(a-b+n+1)_mm!n!}(\tfrac1x)^m(\tfrac yx)^n
\allowdisplaybreaks\\
&\quad=
 \sum\frac{\Gamma(c)\Gamma(a-b+1)\Gamma(b-a)(a)_{m+n}(b')_n(a-c+1)_m}
 {\Gamma(b)\Gamma(c-a)\Gamma(a-b+1+m+n)m!n!}(\tfrac1x)^m(\tfrac yx)^n
\allowdisplaybreaks\\
&\quad=
 \sum\frac{\Gamma(c)\Gamma(b-a)(a)_{m+n}(b')_n(a-c+1)_m}
 {\Gamma(b)\Gamma(c-a)(a-b+1)_{m+n}m!n!}(\tfrac1x)^m(\tfrac yx)^n
\allowdisplaybreaks\\
&\quad=\sum\frac{\Gamma(c)\Gamma(b-a)(a)_{m+n}(a-c+1)_m(b')_n}
 {\Gamma(b)\Gamma(c-a)(a-b+1)_{m+n}m!n!}(\tfrac1x)^m(\tfrac yx)^n\allowdisplaybreaks\\
&\quad=\frac{\Gamma(c)\Gamma(b-a)}{\Gamma(b)\Gamma(c-a)}
 F_1(a,a-c+1,b',a-b+1;\tfrac1x,\tfrac yx),\allowdisplaybreaks\\
&F_1^b=\sum \frac{\Gamma(c)(b')_n\Gamma(a-b+n)(b)_m(b-c-n+1)_m}
 {\Gamma(a)\Gamma(c-b+n)(b-a-n+1)_mm!n!}(\tfrac1x)^my^n
\allowdisplaybreaks\\
&\quad=\sum \frac{\Gamma(c)(b)_m(b')_n\Gamma(a-b+n)\Gamma(b-c-n+1)
   (b-c-n+1)_m(-\tfrac1x)^m(-y)^n}
 {\Gamma(a)\Gamma(c-b)\Gamma(b-c+1)(a-b+n-1)\cdots(a-b+n-m)m!n!}
\allowdisplaybreaks\\
&\quad=\sum \frac{\Gamma(c)(b)_m(b')_n\Gamma(a-b+n-m)\Gamma(b-c-n+1+m)
  (-\tfrac1x)^m(-y)^n}
 {\Gamma(a)\Gamma(c-b)\Gamma(b-c+1)m!n!}
\allowdisplaybreaks\\
&\quad=\sum 
 \frac{\Gamma(c)(b)_m(b')_n\Gamma(a-b)(a-b)_{n-m}(b-c+1)_{m-n}}
 {\Gamma(a)\Gamma(c-b)m!n!}(-\tfrac1x)^m(-y)^n
\allowdisplaybreaks\\
&\quad=\frac{\Gamma(c)\Gamma(a-b)}{\Gamma(a)\Gamma(c-b)}
 G_2(b,b',a-b,b-c+1;-\tfrac1x,-y).
\end{align*}
Here
\begin{equation}
 G_2(\alpha,\beta,\gamma,\delta;x,y):=\sum_{m=0}^\infty\sum_{n=0}^\infty
 \frac{(\alpha)_m(\beta)_n(\gamma)_{n-m}(\delta)_{m-n}}{m!n!}x^my^n
\end{equation}
and we have
\begin{equation}
\begin{split}
f_1^a(x,y)&=F_1(a,a-c+1,b',a-b+1;\tfrac1x,\tfrac yx),\\
f_1^b(x,y)&=G_2(b,b',a-b,b-c+1;-\tfrac1x,-y)
\end{split}
\end{equation}
in \eqref{eq:F1c}.

\begin{remark}
The argument above is justified since $F_1(a;b,b';c;x,y)$ satisfies a differential equation 
which has regular singularities along the hypersurface
$(-\infty,0)\times\{0\}$ of $\mathbb C^2$ (cf.~\cite{KO}) or Kummer's formula
\[
  F(\alpha,\beta,\gamma;x)=(1-x)^{-\alpha}F(\alpha,\gamma-\beta,\gamma;\tfrac x{x-1}).
\]
\end{remark}

We give an answer to a part of connection problem of 
Appell's $F_1$ which satisfies a KZ equation of rank 3 and
the equation allows the coordinate transformations on $(\mathbb P^1)^5$ corresponding 
to the permutations of 5 coordinates.
By the action of this transformation we get Kummer type formula for $F_1$ and solve 
the connection problem for $F_1$.  Note that the singularity of the origin is not of
the normally crossing type but for example,
the map $(x,y)\mapsto (\frac1x,\frac yx)$ is one of the coordinate transformations
and the blowing up of the origin naturally corresponds to this coordinate 
transformation.  This enables us to get all the analytic continuation of $F_1$ 
in $\mathbb P^1\times \mathbb P^1$  in terms of  $F_1$ and $G_2$ as is 
given in the above special case (cf.~\S\ref{sec:KZ}).
Another independent solution $f_1^c$ of the equation at $(\infty,0)$ is characterized by 
the fact that the analytic continuation of $f_1^c$ in a suitable neighborhood of 
$[\infty,0)\times \{0\}$ is a scalar multiple of $f_1^c$ and then 
$f_1^c$ is expressed by using $F_1$
as in the case of $(-\tfrac1x)^af_1^a$.
This will be discussed in another paper with more general examples.
\section{More transformations}
In this section we examine transformations of power series obtained by a suitable class of 
coordinate transformations and the transformations $K_x^{\mu,\boldmath \lambda}$ and 
$L_x^{\mu,\boldmath\lambda}$.
For a coordinate transformation $x\mapsto R(x)$ of $\mathbb C^n$ we put
\[
   (T_{x\to R(x)}\phi)=\phi(R(x))
\]
for functions $\phi(x)$.
\begin{definition}
Choose a subset of indices $\{i_1,\dots,i_k\}\subset \{1,\dots,n\}$ 
and put $\mathbf y=(x_{i_1},\dots,x_{i_k})$.
Then for $\mu\in\mathbb C$ and $\boldsymbol\lambda\in\mathbb C^k$ we define
\begin{align*}
K_{\mathbf y,x\to R(x)}^{\mu,\boldsymbol\lambda}
 &:=T_{x\to R(x)}^{-1}\circ
 K_{\mathbf y}^{\mu,\boldsymbol \lambda}\circ T_{x\to R(x)},\\
L_{\mathbf y,x\to R(x)}^{\mu,\boldsymbol\lambda}
 &:=T_{x\to R(x)}^{-1}\circ
 L_{\mathbf y}^{\mu,\boldsymbol\lambda}\circ T_{x\to R(x)}.
\end{align*}
\end{definition}
Let $\mathbf p=\Bigl(p_{i,j}\Bigr)_{\substack{1\le i\le n\\1\le j\le n}}\in GL(n,\mathbb Z)$.
We denote
\begin{equation}
\begin{split}
 x^{\mathbf p}&={\mathbf x}^{\mathbf p}=(x^{p_{*,1}},\dots,x^{p_{*,n}})
       =\Bigl(\prod_{\nu=1}^nx_\nu^{p_{\nu,1}},\dots,\prod_{\nu=1}^nx_\nu^{p_{\nu,n}}\Bigr),\\
 \mathbf p\mathbf m&=(p_{1,*}\mathbf m,\dots,p_{n,*}\mathbf m)
       =\Bigl(\sum_{\nu=1}^np_{1,\nu}m_\nu,\dots,\sum_{\nu=1}^n{p_{n,\nu}}m_\nu\Bigr)
\end{split}
\end{equation}
with $\mathbf m=(m_1,\dots,m_n)\in\mathbb Z^n$.  Then 
$T_{x\to x^{\mathbf p}}^{-1}=T_{x\to x^{\mathbf p^{-1}}}$

We examine the transformations 
$K_{\mathbf y,x\to x^{\mathbf p}}^{\mu,\boldsymbol\lambda}$ and 
$L_{\mathbf y,x\to x^{\mathbf p}}^{\mu,\boldsymbol\lambda}$
under the assumption
\begin{equation}\label{eq:positive}
 p_{i_\nu,j}\ge 0\qquad(1\le \nu\le k,\ 1\le j\le n).
\end{equation}
Since
\[
 \bigl(T_{x\to x^{\mathbf p}}^{-1}T_{x\to(t_1x_1,\dots,t_nx_n)}T_{x\to x^{\mathbf p}}\phi\bigr)(x)=
 \phi\Bigl(x_1\prod_{\nu=1}^n t_\nu^{p_{\nu,1}},\dots,x_n\prod_{\nu=1}^n t_\nu^{p_{\nu,n}}\Bigr),
\]
we have
\begin{align}\label{eq:Kdef}
\begin{split}
&\bigl(K_{(x_{i_1},\dots,x_{i_k}),x\to x^{\mathbf p}}^{\mu,\boldsymbol\lambda}\phi\bigr)(x)\\
&\quad=\frac1{\Gamma(\mu)}\int_{\substack{t_1>0,\dots t_k>0\\ t_1+\cdots+t_k<1}}
 \mathbf t^{\boldsymbol \lambda-1}(1-|\mathbf t|)^{\mu-1}
 \phi\Bigl(x_1\prod_{\nu=1}^k t_\nu^{p_{i_\nu,1}},\dots,x_n\prod_{\nu=1}^k t_\nu^{p_{i_\nu,n}}\Bigr)\D t,
\end{split}\\
\label{eq:Ldef}
\begin{split}
&\bigl(L_{(x_{i_1},\dots,x_{i_k}),x\to x^{\mathbf p}}^{\mu,\boldsymbol\lambda}\phi\bigr)(x)
=\frac{\Gamma(\mu+k)}{(2\pi i)^k}\int_{c-i\infty}^{c+i\infty}\cdots \int_{c-i\infty}^{c+i\infty}
 \mathbf t^{\boldsymbol \lambda-1}(1-|\mathbf t|)^{-\mu-k}\\
&\qquad
 \times\phi\Bigl(\frac{x_1}{\prod_{\nu=1}^k t_\nu^{p_{i_\nu,1}}},\dots,
 \frac{x_n}{\prod_{\nu=1}^k t_\nu^{p_{i_\nu,n}}}\Bigr)\frac{\D t_1}{t_1}\cdots\frac{\D t_k}{t_k}
 \quad\text{with \ }c=\tfrac1{k+1}.
\end{split}
\end{align}
We note that \eqref{eq:positive} assures that these transformations are defined on 
$\mathcal O_0$.  
\begin{proposition}
Denoting
\[
 (\mathbf p\mathbf m)_{i_1,\dots,i_k}
  =\Bigl(\sum_{\nu=1}^np_{i_1,\nu}m_\nu,\dots,\sum_{\nu=1}^n{p_{i_k,\nu}}m_\nu\Bigr),
\]
we have 
\begin{align}
K_{(x_{i_1},\dots,x_{i_k}),x\to x^{\mathbf p}}^{\mu,\boldsymbol\lambda}{x^\mathbf m}
=\frac{\Gamma(\boldsymbol\lambda + (\mathbf p\mathbf m)_{i_1,\dots,i_k})}
 {\Gamma(|\boldsymbol\lambda+(\mathbf p\mathbf m)_{i_1,\dots,i_k}|+\mu)}x^{\mathbf m},
\label{eq:Kco}\\
L_{(x_{i_1},\dots,x_{i_k}),x\to x^{\mathbf p}}^{\mu,\boldsymbol\lambda}{x^\mathbf m}
 =\frac {\Gamma(|\boldsymbol\lambda+(\mathbf p\mathbf m)_{i_1,\dots,i_k}|+\mu)}
 {\Gamma(\boldsymbol\lambda + (\mathbf p\mathbf m)_{i_1,\dots,i_k})}
 x^{\mathbf m}.\label{eq:Lco}
\end{align}
\end{proposition}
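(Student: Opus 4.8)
The plan is to unwind the definition of $K_{(x_{i_1},\dots,x_{i_k}),x\to x^{\mathbf p}}^{\mu,\boldsymbol\lambda}$ and $L_{(x_{i_1},\dots,x_{i_k}),x\to x^{\mathbf p}}^{\mu,\boldsymbol\lambda}$ as conjugates of $K_{\mathbf y}^{\mu,\boldsymbol\lambda}$ resp.\ $L_{\mathbf y}^{\mu,\boldsymbol\lambda}$ by the monomial substitution $T_{x\to x^{\mathbf p}}$, and thereby reduce the claim to the eigenvalue formulas $K_x^{\mu,\boldsymbol\lambda}x^{\boldsymbol\alpha}=\frac{\Gamma(\boldsymbol\lambda+\boldsymbol\alpha)}{\Gamma(|\boldsymbol\lambda+\boldsymbol\alpha|+\mu)}x^{\boldsymbol\alpha}$ and $L_x^{\mu,\boldsymbol\lambda}x^{\boldsymbol\alpha}=\frac{\Gamma(|\boldsymbol\lambda+\boldsymbol\alpha|+\mu)}{\Gamma(\boldsymbol\lambda+\boldsymbol\alpha)}x^{\boldsymbol\alpha}$ recorded just before Theorem~\ref{thm:KL}. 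Since the assertion concerns only monomials, I would compute directly on $x^{\mathbf m}$ with $\mathbf m\ge 0$.

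First I would note that $T_{x\to x^{\mathbf p}}x^{\mathbf m}=x^{\mathbf p\mathbf m}=\prod_{\nu=1}^n x_\nu^{(\mathbf p\mathbf m)_\nu}$ straight from the definitions of $x^{\mathbf p}$ and $\mathbf p\mathbf m$, and that $T_{x\to x^{\mathbf p}}^{-1}=T_{x\to x^{\mathbf p^{-1}}}$ since $\mathbf p\in GL(n,\mathbb Z)$, so that $T_{x\to x^{\mathbf p^{-1}}}x^{\mathbf p\mathbf m}=x^{\mathbf p^{-1}\mathbf p\mathbf m}=x^{\mathbf m}$. Next, applying $K_{\mathbf y}^{\mu,\boldsymbol\lambda}$ with $\mathbf y=(x_{i_1},\dots,x_{i_k})$ to $x^{\mathbf p\mathbf m}$: the integral defining $K_{\mathbf y}^{\mu,\boldsymbol\lambda}$ runs only over the variables of $\mathbf y$, so the factor $\prod_{\nu\notin\{i_1,\dots,i_k\}}x_\nu^{(\mathbf p\mathbf m)_\nu}$ comes out as a constant, and the $k$-variable operator acts on $\prod_{l=1}^k x_{i_l}^{(\mathbf p\mathbf m)_{i_l}}$ by the scalar $\frac{\Gamma(\boldsymbol\lambda+(\mathbf p\mathbf m)_{i_1,\dots,i_k})}{\Gamma(|\boldsymbol\lambda+(\mathbf p\mathbf m)_{i_1,\dots,i_k}|+\mu)}$, namely the eigenvalue formula above with $\boldsymbol\alpha=(\mathbf p\mathbf m)_{i_1,\dots,i_k}$. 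Conjugating back by $T_{x\to x^{\mathbf p^{-1}}}$ turns $x^{\mathbf p\mathbf m}$ into $x^{\mathbf m}$ and leaves the scalar untouched, which is precisely \eqref{eq:Kco}; the computation for $L$ is word-for-word the same with the reciprocal eigenvalue, giving \eqref{eq:Lco}.

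As a cross-check one may instead substitute $x^{\mathbf m}$ directly into the explicit integrals \eqref{eq:Kdef} and \eqref{eq:Ldef}: the integrand separates as $x^{\mathbf m}$ times $\prod_{\nu=1}^k t_\nu^{\lambda_\nu-1+(\mathbf p\mathbf m)_{i_\nu}}$ multiplied by $(1-|\mathbf t|)^{\mu-1}$ (resp.\ $(1-|\mathbf t|)^{-\mu-k}$), and the Dirichlet-type integral (resp.\ the iterated contour integral) evaluated near the start of \S\ref{sec:Int}, with $\alpha_\nu=\lambda_\nu-1+(\mathbf p\mathbf m)_{i_\nu}$ and the $n$ there replaced by $k$, produces exactly the claimed coefficient. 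The one point that genuinely needs attention, and which I expect to be the only delicate part, is well-definedness: hypothesis \eqref{eq:positive} guarantees that each exponent $(\mathbf p\mathbf m)_{i_\nu}=\sum_{j}p_{i_\nu,j}m_j$ is a nonnegative integer when $\mathbf m\ge 0$, so the transformed series again lies in $\mathcal O_0$ and the integrals converge absolutely when $\RE\lambda_\nu>0$ and $\RE\mu>0$; the general statement then follows, as for \eqref{eq:Kp} and \eqref{eq:Lx0}, by analytic continuation of the right-hand sides in $(\boldsymbol\lambda,\mu)$, valid under \eqref{eq:Kd} and \eqref{eq:Ld}. Beyond this, the proof is a routine substitution, the only bookkeeping being to track which exponent enters which $\Gamma$-factor and to match the index $k$ in the Dirichlet/contour evaluation.
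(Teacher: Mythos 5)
Your argument is correct and is essentially the computation the paper intends: the proposition follows immediately from the definition of $K_{\mathbf y,x\to x^{\mathbf p}}^{\mu,\boldsymbol\lambda}$ as a conjugate of $K_{\mathbf y}^{\mu,\boldsymbol\lambda}$ by $T_{x\to x^{\mathbf p}}$ (equivalently, from substituting $x^{\mathbf m}$ into \eqref{eq:Kdef} and \eqref{eq:Ldef} and evaluating the Dirichlet, resp.\ contour, integral with $\alpha_\nu=\lambda_\nu-1+(\mathbf p\mathbf m)_{i_\nu}$), which is exactly what you do. Your remarks on \eqref{eq:positive} guaranteeing that the exponents stay in $\mathbb N$ and on extending by analytic continuation in $(\boldsymbol\lambda,\mu)$ match the paper's treatment.
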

We give some examples hereafter in this section.

Let $(p_1,\dots,p_n)$ be a non-zero vector of non-negative integers.
Suppose the greatest common divisor of $p_1,\dots,p_n$ equals 1.
Then there exists $\mathbf p=\bigl(p_{i,j}\bigr)\in GL(n,\mathbb Z)$ with $p_{1,j}=p_j$
and
\[
K_{x_1,x\to \mathbf x^{\mathbf p}}^{\mu,\lambda}x^{\mathbf m}
 =\frac{\Gamma(\lambda)}{\Gamma(\lambda+\mu)}
 \frac{(\lambda)_{p_1m_1+\dots+p_nm_n}}{(\lambda+\mu)_{p_1m_1+\dots+p_nm_n}}x^{\mathbf m}.
\]

In particular we have
\begin{align}
K_{x_1,x\to (x_1,\tfrac{x_1}{x_2},\dots,\tfrac{x_1}{x_n})}^{\mu,\lambda}x^{\mathbf m}
&=\frac{\Gamma(\lambda)}{\Gamma(\lambda+\mu)}
 \frac{(\lambda)_{m_1+\dots+m_n}}{(\lambda+\mu)_{m_1+\dots+m_n}}x^\mathbf m,\\
F_D(\lambda_0,\boldsymbol\lambda,\mu;\mathbf x)
&=
 \frac{\Gamma(\mu)}{\Gamma(\lambda_0)}
 K_{x_1,x\to (x_1,\tfrac{x_1}{x_2},\dots,\tfrac{x_1}{x_n})}^{\mu-\lambda_0,\lambda_0}
 (1-\mathbf x)^{-\boldsymbol\lambda}\quad\text{(cf.~\eqref{eq:FD})}.\label{eq:FD2}
\end{align}
Here we note that the coordinate transformation 
$x\mapsto (x_1,\tfrac{x_1}{x_2},\dots,\tfrac{x_1}{x_n})$ gives a transformation of 
KZ equations of $n$ variables (cf.~\cite[\S6]{Okz}).

Let $\mathbf p=\begin{pmatrix}p_1&p_2\\q_1&q_2\end{pmatrix}\in GL(2,\mathbb Z)$ 
with $p_1,\,p_2,\,q_1,\,q_2\ge 0$.  Put $\tilde{\mathbf  p}=
\mathbf p\otimes I_{n-2}\in GL(n,\mathbb Z)$.  Then
\begin{align*}
 K_{(x_1,x_2),x\to x^{\tilde{\mathbf p}}}^{\mu,(\lambda_1,\lambda_2)}
x^{\mathbf m}
=\frac{\Gamma(\lambda_1)\Gamma(\lambda_2)}
  {\Gamma(\lambda_1+\lambda_2+\mu)}
  \frac{(\lambda_1)_{p_1m_1+p_2m_2}(\lambda_2)_{q_1m_1+q_2m_2}}
  {(\lambda_1+\lambda_2+\mu)_{(p_1+q_1)m_1+(p_2+q_2)m_2}}x^{\mathbf m},\\
 L_{(x_1,x_2),x\to x^{\tilde{\mathbf p}}}^{\mu,(\lambda_1,\lambda_2)}
x^{\mathbf m}
=\frac{\Gamma(\lambda_1+\lambda_2+\mu)}
  {\Gamma(\lambda_1)\Gamma(\lambda_2)}
   \frac{(\lambda_1+\lambda_2+\mu)_{(p_1+q_1)m_1+(p_2+q_2)m_2}}
  {(\lambda_1)_{p_1m_1+p_2m_2}(\lambda_2)_{q_1m_1+q_2m_2}}
  x^{\mathbf m}.
\end{align*}

Successive applications of these transformations to $(1-|\mathbf x|)^{-\lambda}$ or 
$(1-\mathbf x)^{-\boldsymbol\lambda}$ or $e^{|\mathbf x|}$,\ldots, we have many examples of integral representations of power
series whose coefficients of $\frac{x^\mathbf m}{\mathbf m!}$ are expressed by the quotient of
products of the form $(\lambda)_{p_1m_1+\cdots+p_nm_n}$.

The series 
\begin{equation}\label{eq:GF}
\phi(x,y)=\sum_{m=0}^\infty\sum_{n=0}^\infty
  \frac{\prod_{\nu=1}^K (a_\nu)_{m+n}\prod_{\nu=1}^M (b_\nu)_{m}\prod_{\nu=1}^N (c_\nu)_{n}}
  {\prod_{\nu=1}^{K'} (a'_\nu)_{m+n}\prod_{\nu=1}^{M'} (b'_\nu)_m\prod _{\nu=1}^{N'}(c'_\nu)_n}\frac{x^m}{m!}\frac{y^n}{n!}
\end{equation}
with the condition
\[
  (K+M)-(K'+M')=(K+N)-(K'+N')=1.
\]
is an example. Then Appell's hypergeometric functions $F_1$, $F_2$, $F_3$ and $F_4$ correspond to
$(K,M,N;K',N',N')=(1,1,1;1,0,0)$, $(1,1,1;0,1,1)$, $(0,2,2;1,0,0)$ and
$(2,0,0;0,1,1)$, respectively.
In general $\phi(x,y)$ may have several integral expressions as in the case of $F_1$ and $F_2$.
The series \eqref{eq:GF} with $M=M'+1$, $N=N'+1$ and $K=K$ is a generalization of Appell's $F_1$,
which will be given in \S\ref{sec:KZ}  as an example.

The series 
\begin{equation}\label{eq:211}
\begin{split}
  &K_x^{\gamma_2-\beta_2,\beta_2}\cdot (1-x)^{\alpha_2}\cdot K_x^{\gamma_1-\beta_1,\beta_1}
  (1-x)^{-\alpha_1}\\
 &\quad =\frac{\Gamma(\beta_1)\Gamma(\beta_2)}{\Gamma(\gamma_1)\Gamma(\gamma_2)}
  \sum_{m=0}^\infty\sum_{n=0}^\infty \frac{(\alpha_1)_m(\alpha_2)_m(\beta_1)_m(\beta_2)_{m+n}}
  {(\gamma_1)_m(\gamma_2)_{m+n}m!n!}x^{m+n}
\end{split}
\end{equation}
of $x\in\mathbb C$ satisfies a Fuchsian differential equation with the spectral type $211,211,211$ 
(cf.~\cite[\S13.7.5]{Ow} and Remark~\ref{rm:1-x}) and the coefficients of $x^k$ is not simple.
\section{Differential equations}
In this section we examine the differential equations satisfied by our invertible 
integral transformations of a function $u(x)$ in terms of the differential equation 
satisfied by $u(x)$.
We denote by $W[x]$ the ring of differential operators with polynomial coefficients
and put  $W(x)=\mathbb C[x]\otimes W[x]$.  Then $W[x]$  is called a Weyl algebra.

First we review the related results in \cite{Ow}. The integral transformation 
$u\mapsto I_c^\mu u$  given by \eqref{eq:RL} satisfies 
\begin{align}
  I_c^{-\mu}\circ I_c^\mu&=\mathrm{id}\\
  I_c^\mu\circ \p &= \p\circ I_c^\mu
\text{ \ and \ }I_c^\mu\circ\vartheta = (\vartheta -\mu)\circ  I_c^\mu\label{eq:Icm}
\end{align}
under the notation
\begin{equation}
 \p=\tfrac{d}{dx},\quad \vartheta = x\p.
\end{equation}
Hence for an ordinary differential operator $P\in W[x]$, we define
the middle convolution $\mathrm{mc}_\mu(P)$ of $P$ by
\begin{equation}
  \mathrm{mc}_\mu(P) := \p^{-m}\sum_{i,j}a_{i,j}\p^i(\vartheta -\mu)^j\in W[x].
\end{equation}
Here we first choose a positive integer $k$ so that
\begin{equation}
  \p^k P=\sum_{i\ge0,\ j\ge0}a_{i,j}\p^i\vartheta^j\quad ([a_{i,j},x]=[a_{i,j},\p]=0)
\end{equation}
and then we choose the maximal positive integer $m$ so that  $\mathrm{mc}_\mu(P)\in W[x]$.
The number $k$ can be taken to be the degree of $P$ with respect to $x$. 
Then we have
\begin{equation}
   Pu=0\ \ \Rightarrow\ (\mathrm{mc}_\mu(P))I_c^\mu u=0.
\end{equation}

The transformation $u\mapsto f(x)u$ of $u(x)$ defined by a suitable function 
$f(x)$ induces an automorphism $\Ad(f)$ of $W(x)$.  
Namely $\Ad(f)$ is called an addition and defined by 
\begin{equation}
  \Ad(f)\p=\p-\tfrac{\p(f)}{f}\text{ \ and \ }\Ad(f)x=x.
\end{equation}
Hence $\tfrac{\p(f)}{f}$ should be a rational function.  Then $f(x)$ can be a function
$(x-c)^\lambda$ or $f(x)=e^{r(x)}$ with a rational function $r(x)$.

There is another transformation $\mathrm{R}P$ of $P\in W(x)\setminus\{0\}$ 
where we define $\mathrm{R}P=r(x)P$ with 
$r(x)\in\mathbb C[x]\setminus\{0\}$ so that $r(x)P\in W[x]$ has 
the minimal degree with respect to $x$.
Then $\mathrm{R}P$ is called the \emph{reduced representative} of $P$.
When we consider $\mathrm{mc}_\mu(P)$, we usually replace $P$ by $\mathrm{R}P$.

Let $Pu=0$ be a rigid Fuchsian differential equation on  $\mathbb P^1$.
Then it is proved in \cite{Ow} that $P$ is obtained by successive applications of
$\Ad(f)$ and $\mathrm{mc}_\mu\circ\mathrm{R}$ to $\p$ and hence we have an integral 
representation of the solution to this equation and moreover its expansion into a power 
series.

In a similar way, the author \cite[\S13.10]{Ow} examines Appell's hypergeometric functions 
using the integral transformation 
\begin{align}
  J^\mu_x(u)(x)
 &:=\int_\Delta (1-s_1x_1-\dots-s_nx_n)^\mu u(s_1,\dots,s_n)\D s\\
 &\,=\frac1{x_1\cdots x_n}\int_{\Delta'} (1-t_1-\dots-t_n)^{\mu} 
     u(\tfrac{t_1}{x_1},\dots,\tfrac{t_n}{x_n})\D t
   \quad(t_j=s_jx_j)\notag
\end{align}
with certain regions $\Delta$ and $\Delta'$ of integrations and
get integral representations of Appell's hypergeometric functions.
We show there the commuting relations 
\begin{align}\label{eq:Jcm}
\begin{split}
  J_x^\mu\circ \vartheta_j&=(-1-\vartheta_j)\circ J_x^\mu,\\
  J_x^\mu\circ \p_j&=x_j(\mu-\vartheta_1-\cdots-\vartheta_n)\circ J_x^\mu,
\end{split}
\end{align}
which correspond to \eqref{eq:Icm} and imply the following proposition
and then we get the differential equations satisfied by Appell's hypergeometric functions.

In general, we have the following proposition.
\begin{proposition}[{\cite[Proposition 13.2]{Ow}}]\label{prop:3}
For a differential operator
\[
  P=\sum_{\boldsymbol \alpha,\boldsymbol\beta \in\mathbb N^n} 
  c_{\boldsymbol \alpha,\boldsymbol \beta}\p^{\boldsymbol\alpha}\vartheta^{\boldsymbol\beta}
\]
we have
\[
\begin{split}
  J_x^\mu\bigl(Pu(x)\bigr)&=J_x^\mu(P)J_x^\mu\bigl(u(x)\bigr),\\
  J_x^\mu(P)&:=\sum_{\boldsymbol \alpha,\boldsymbol\beta \in\mathbb N^n} 
    c_{\boldsymbol\alpha,\boldsymbol\beta}
   \Bigl(\prod_{k=1}^n\bigl(x_k(\mu-\vartheta_1-\cdots-\vartheta_n)\bigr)^{\alpha_k}\Bigr)
   (-\boldsymbol\vartheta-1)^{\boldsymbol\beta}.
\end{split}
\]
\end{proposition}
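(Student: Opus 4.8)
The plan is to obtain the identity by \emph{peeling off} the operators $\p_j$ and $\vartheta_j$ one at a time, using only the two commuting relations \eqref{eq:Jcm}, which I take as given (they amount to differentiation under the integral sign together with an integration by parts whose boundary contributions vanish on the cycles $\Delta$, $\Delta'$; the genuine analytic content of the whole statement sits there). By linearity of the integral operator $J_x^\mu$ it is enough to treat a single monomial $P=\p^{\boldsymbol\alpha}\vartheta^{\boldsymbol\beta}$ and then sum over $\boldsymbol\alpha,\boldsymbol\beta$ with the coefficients $c_{\boldsymbol\alpha,\boldsymbol\beta}$ and evaluate on $u$.

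First I would dispose of the $\vartheta$-part. Writing $\vartheta^{\boldsymbol\beta}=\vartheta_{l_1}\cdots\vartheta_{l_s}$ as a product of $s=|\boldsymbol\beta|$ factors and applying the first relation of \eqref{eq:Jcm} from the left $s$ times,
\[
 J_x^\mu\circ\vartheta^{\boldsymbol\beta}
  =(-1-\vartheta_{l_1})\cdots(-1-\vartheta_{l_s})\circ J_x^\mu
  =(-\boldsymbol\vartheta-1)^{\boldsymbol\beta}\circ J_x^\mu,
\]
the last step using that the $\vartheta_j$, hence the $-1-\vartheta_j$, commute pairwise. Then I would do the same for the $\p$-part: with $A_k:=x_k(\mu-\vartheta_1-\cdots-\vartheta_n)$ and $\p^{\boldsymbol\alpha}=\p_{j_1}\cdots\p_{j_r}$, $r=|\boldsymbol\alpha|$, an induction on $r$ based on the second relation of \eqref{eq:Jcm} gives $J_x^\mu\circ\p^{\boldsymbol\alpha}=A_{j_1}\cdots A_{j_r}\circ J_x^\mu$. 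Since $P$ is already normal-ordered as $\p^{\boldsymbol\alpha}\circ\vartheta^{\boldsymbol\beta}$, composing the two facts yields $J_x^\mu\circ P=(A_{j_1}\cdots A_{j_r})(-\boldsymbol\vartheta-1)^{\boldsymbol\beta}\circ J_x^\mu$; applying this to $u$ and summing over $\boldsymbol\alpha,\boldsymbol\beta$ gives $J_x^\mu(Pu)=J_x^\mu(P)J_x^\mu(u)$.

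The one point requiring a separate check — and the only place where the inductions do not literally produce the form stated in the proposition — is that the factors $A_1,\dots,A_n$ commute with one another, so that $A_{j_1}\cdots A_{j_r}$ depends only on the multiplicities $\alpha_k$ and equals $\prod_{k=1}^n A_k^{\alpha_k}$, precisely the shape of $J_x^\mu(P)$. I would verify this from $\vartheta_j x_l=x_l(\vartheta_j+\delta_{jl})$, which gives $(\mu-\vartheta_1-\cdots-\vartheta_n)x_l=x_l(\mu-\vartheta_1-\cdots-\vartheta_n-1)$ and hence $A_kA_l=x_kx_l(\mu-\vartheta_1-\cdots-\vartheta_n-1)(\mu-\vartheta_1-\cdots-\vartheta_n)$, visibly symmetric in $k$ and $l$. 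So there is no serious obstacle: the proof reduces to the two inductions plus this small computation, once \eqref{eq:Jcm} is available.
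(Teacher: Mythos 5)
Your argument is correct and is exactly the route the paper intends: the proposition is presented as a direct consequence of the commuting relations \eqref{eq:Jcm}, and your peeling-off inductions together with the observation that the operators $x_k(\mu-\vartheta_1-\cdots-\vartheta_n)$ commute pairwise (so the product reorders into $\prod_k A_k^{\alpha_k}$) supply precisely the details the paper leaves to \cite[Proposition 13.2]{Ow}. No gaps.
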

Here the sums are finite and we use the notation.
\begin{align}
\p_x=\tfrac\p{\p x},\ \p_y=\tfrac\p{\p y},\ \vartheta_x=x\p_x,\ \vartheta_y=y\p_y,\ 
\p_i=\tfrac\p{\p x_i},\ \vartheta_i=x_i\p_i.
\end{align}

Comparing the definition of integral transformations we have the following.
\begin{proposition}\label{prop:4}
The integral transformations defined in \S\ref{sec:Int} is expressed by $J^\mu_x$ as follows.
\begin{align}
\begin{split}
   K_x^\mu&=\frac{1}{\Gamma(\mu)}
   T_{x\to(\tfrac1{x_1},\dots,\tfrac1{x_n})}\circ x_1\cdots x_n\cdot J_x^{\mu-1}\\
  &\text{with}\quad \Delta'=\bigl\{(t_1,\dots,t_n)\mid t_1>0,\dots,t_n>0,\ t_1+\cdots+t_n<1\bigr\}
\end{split}
\intertext{and}
\begin{split}
  L_x^\mu&=\frac{\Gamma(\mu+n)}{(2\pi i)^n}
   J_x^{-\mu-n}\circ T_{x\to(\tfrac1{x_1},\dots,\tfrac1{x_n})}\circ  x_1\cdots x_n\\
  &\text{with}\quad \Delta'=\bigl\{(t_1,\dots,t_n)\mid \RE t_1=\cdots=\RE t_n=\tfrac1{n+1}\bigr\}.
\end{split}
\end{align}
\end{proposition}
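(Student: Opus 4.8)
The plan is to prove both identities by direct substitution, keeping careful track of the coordinate inversion $T_{x\to(1/x_1,\dots,1/x_n)}$, the multiplication operator $x_1\cdots x_n$, and the Jacobian-type factors produced along the way; no deep idea is involved, only bookkeeping.

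For $K_x^\mu$ I would start from the second form of $J_x^{\mu-1}$ in its definition,
\[
 J_x^{\mu-1}(u)(x)=\frac1{x_1\cdots x_n}\int_{\Delta'}(1-t_1-\cdots-t_n)^{\mu-1}
   u\bigl(\tfrac{t_1}{x_1},\dots,\tfrac{t_n}{x_n}\bigr)\D t_1\cdots\D t_n .
\]
Multiplying by $x_1\cdots x_n$ removes the prefactor, and applying $T_{x\to(1/x_1,\dots,1/x_n)}$ replaces each $x_i$ by $1/x_i$ inside the integrand, turning $u(t_i/x_i)$ into $u(t_ix_i)$ without touching the contour $\Delta'$. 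Dividing by $\Gamma(\mu)$ and taking $\Delta'=\{t_1>0,\dots,t_n>0,\ t_1+\cdots+t_n<1\}$ reproduces the definition of $K_x^\mu$ verbatim; the manipulation is legitimate for $\RE\mu>0$, and the general statement then follows from the analytic continuation in the parameters already used in \S\ref{sec:Int}.

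For $L_x^\mu$ the same bookkeeping is carried out in the opposite order. Applying first $x_1\cdots x_n$ and then $T_{x\to(1/x_1,\dots,1/x_n)}$ replaces $u$ by $v(x):=\tfrac1{x_1\cdots x_n}u(1/x_1,\dots,1/x_n)$, and inserting $v$ into the $\Delta'$-form of $J_x^{-\mu-n}$ gives, via $v(t_i/x_i)=\tfrac{x_1\cdots x_n}{t_1\cdots t_n}\,u(x_i/t_i)$, a cancellation of the factor $x_1\cdots x_n$ together with the emergence of the measure $\tfrac{\D t_1}{t_1}\cdots\tfrac{\D t_n}{t_n}$. Multiplying by $\Gamma(\mu+n)/(2\pi i)^n$ and taking $\Delta'=\{\RE t_1=\cdots=\RE t_n=\tfrac1{n+1}\}$ yields exactly the second displayed formula in the definition of $L_x^\mu$, which was shown there to agree with the first.

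The step that deserves the most care --- and which I would single out --- is the compatibility of the domains of integration under the inversion: since $T_{x\to(1/x_1,\dots,1/x_n)}$ stands \emph{outside} the integral sign in both formulas, it does not act on $\Delta'$, so the region of $J_x^{-\mu-n}$ is literally the product of vertical lines $\RE t_j=\tfrac1{n+1}$ occurring in the definition of $L_x^\mu$ (and likewise the open simplex for $K_x^\mu$). One also has to match the shift of the exponent, $\mu-1$ for $K$ and $-\mu-n$ for $L$, coming from the power of $1-|\mathbf t|$ in the two definitions in \S\ref{sec:Int}.
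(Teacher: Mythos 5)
Your verification is correct and is exactly what the paper intends: it offers no written proof beyond ``comparing the definition of integral transformations,'' i.e.\ the same direct substitution you carry out, with the prefactor $\tfrac1{x_1\cdots x_n}$ in the $\Delta'$-form of $J_x^\mu$ cancelling against the multiplier $x_1\cdots x_n$ and the inversion turning $u(t_i/x_i)$ into $u(t_ix_i)$ (resp.\ producing the measure $\tfrac{\D t_i}{t_i}$). The only nit is that your computation for $L_x^\mu$ lands on the \emph{first} displayed form of its definition (the one in the variables $t_i$ with argument $x_i/t_i$) rather than the second, but since the two forms are related by the substitution $s_j=1/t_j$ this changes nothing.
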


For $\mathbf p\in GL(n,\mathbb Z)$ we put $\mathbf q=\mathbf p^{-1}$. Then
\begin{align*}
 T_{x\to x^{\mathbf p}}(x_j)&=x^{p_ {*.j}}=\prod_{\nu=1}^n x_\nu^{p_\nu,j}\text{ \ and \ }
 T_{x\to x^{\mathbf p}}(\p_i) = \sum_{j=1}^nq_{i,j}\frac{x_j}{x^{p_{j,i}}}\p_j.
\end{align*}

In particular
\begin{align*}
 T_{x\to (\tfrac1{x_1},\dots,\tfrac1{x_n})}(x_j)&=\tfrac1{x_j},\quad
 T_{x\to (\tfrac1{x_1},\dots,\tfrac1{x_n})}(\p_j) = -x_j^2\p_j
\intertext{and}
 T_{x\to (\tfrac1{x_1},\dots,\tfrac1{x_n})}\circ x_1\dots x_n&=
 \frac1{x_1\cdots x_n}\circ T_{x\to (\tfrac1{x_1},\dots,\tfrac1{x_n})}
\end{align*}
and thus  we have the following lemma.
\begin{lemma}\label{lem:1}
Defining 
\begin{align}
 \widetilde{u}(x_1,\dots,x_n)&:=\tfrac1{x_1\cdots x_n}u(\tfrac1{x_1},\dots,\tfrac1{x_n}),\notag\\
 \widetilde P=P\tilde{\phantom{p}} 
 &:=\sum a_{\boldsymbol\alpha}(\tfrac1{x_1},\ldots,\tfrac1{x_n})\prod_\nu (-x_\nu^2\p_\nu-x_\nu)^{\beta_\nu }\\
\text{for \ \ }P&=\sum a_{\boldsymbol\alpha}(x)\p^{\boldsymbol\alpha}\in W(x),\notag
\end{align}
we have
\begin{gather}
 \widetilde{P u}=\widetilde P\widetilde u,\\
 \widetilde{x_j}=\tfrac1{x_j},\ \widetilde{\p_j}=-x_j^2\p_j-x_j=-x_j(\vartheta_j+1),\ \widetilde\vartheta_j=-\vartheta_j-1.
\end{gather}
\end{lemma}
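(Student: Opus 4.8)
The plan is to express the map $u\mapsto\widetilde u$ as a composition of elementary operators and to realise $P\mapsto\widetilde P$ as the conjugation of $P$ by this composition. Write $T$ for the substitution operator $T_{x\to(1/x_1,\dots,1/x_n)}$ and $M_g$ for multiplication by a function $g$. Since the coordinate change $\sigma\colon x\mapsto(1/x_1,\dots,1/x_n)$ is an involution, $T^{-1}=T$, and by definition $\widetilde u=M_{1/(x_1\cdots x_n)}\circ T\,u$, so the operator $\widetilde{(\cdot)}$ equals $M_{1/(x_1\cdots x_n)}\circ T$, with inverse $T\circ M_{x_1\cdots x_n}$. Because both factors are invertible, $\Phi(Q):=\widetilde{(\cdot)}\circ Q\circ\widetilde{(\cdot)}^{-1}$ is a well-defined automorphism of $W(x)$, and the assertion $\widetilde{Pu}=\widetilde P\,\widetilde u$ is equivalent to the operator identity $\Phi(P)=\widetilde P$.

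First I would compute $\Phi$ on the generators. From $T\circ M_{x_j}=M_{1/x_j}\circ T$, together with the fact that $M_{1/(x_1\cdots x_n)}$ commutes with every multiplication operator, one gets $\Phi(x_j)=M_{1/x_j}$, i.e. $\widetilde{x_j}=1/x_j$. For $\partial_j$ the key intermediate step is $T\circ\partial_j\circ T^{-1}=-x_j^2\partial_j$, which follows from the chain rule $\partial_j(g\circ\sigma)=-x_j^{-2}(\partial_j g)\circ\sigma$: only the $j$-th term survives because $\sigma$ acts coordinatewise. Conjugating once more by $M_{1/(x_1\cdots x_n)}$ and using $\partial_j\circ M_{x_1\cdots x_n}=M_{x_1\cdots x_n}\circ\partial_j+M_{(x_1\cdots x_n)/x_j}$ yields $\Phi(\partial_j)=-x_j^2\partial_j-x_j=-x_j(\vartheta_j+1)$. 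From these two, $\Phi(\vartheta_j)=\Phi(x_j)\Phi(\partial_j)=M_{1/x_j}(-x_j^2\partial_j-x_j)=-\vartheta_j-1$, which records the remaining displayed formula.

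Finally, for an arbitrary $P=\sum a_{\boldsymbol\alpha}(x)\partial^{\boldsymbol\alpha}\in W(x)$, applying the automorphism $\Phi$ term by term sends the coefficient $a_{\boldsymbol\alpha}(x)$ to $a_{\boldsymbol\alpha}(1/x_1,\dots,1/x_n)$ and $\partial^{\boldsymbol\alpha}$ to $\prod_\nu(-x_\nu^2\partial_\nu-x_\nu)^{\alpha_\nu}$, which is exactly the defining formula of $\widetilde P$; hence $\Phi(P)=\widetilde P$, and therefore $\widetilde{Pu}=\widetilde{(\cdot)}(Pu)=\Phi(P)\,\widetilde u=\widetilde P\,\widetilde u$. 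The only real obstacle is careful bookkeeping of operator orders in the conjugation of $\partial_j$ — in particular the first-order correction $-x_j$ arising from differentiating the weight $x_1\cdots x_n$; the chain-rule step itself is routine once one exploits that $\sigma$ is a diagonal involution.
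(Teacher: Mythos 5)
Your proof is correct and follows essentially the same route as the paper, which obtains the lemma from the displayed identities $T_{x\to(1/x_1,\dots,1/x_n)}(x_j)=\tfrac1{x_j}$, $T_{x\to(1/x_1,\dots,1/x_n)}(\p_j)=-x_j^2\p_j$ and the commutation of $T$ with multiplication by $x_1\cdots x_n$; your conjugation-by-$M_{1/(x_1\cdots x_n)}\circ T$ formulation is just a slightly more formal packaging of the same computation. (You also correctly read the exponent in the definition of $\widetilde P$ as $\alpha_\nu$ rather than the paper's typographical $\beta_\nu$.)
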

Hence Proposition~\ref{prop:3}, Proposition~\ref{prop:4} and Lemma~\ref{lem:1} show
\begin{align}
\begin{split}
 K_x^\mu\circ \vartheta_j&
 =\widetilde{(-1-\vartheta_j)}\circ K_x^\mu
 =\vartheta_j\circ K_x^\mu,
\end{split}\label{eq:Ktheta}\\
\begin{split}
 K_x^\mu\circ\p_j&=\bigl(
 x_j(\mu-1-\vartheta_1-\cdots-\vartheta_n)\bigr)\!\tilde{\phantom l}\circ K_x^\mu\\
  &=\tfrac1{x_j}(\vartheta_1+\cdots+\vartheta_n+\mu+n-1)\circ K_x^\mu.
\end{split}\label{eq:Kdelta}
\end{align}
Similarly we have $L_x^\mu\circ\vartheta_i=\vartheta_i\circ L_x^\mu$ and
\begin{align}
\begin{split}
 L_x^\mu\circ  x_j(\vartheta_j+1)
 &= (x_j(\vartheta_1+\cdots+\vartheta_n+\mu+n))\circ
 L_x^\mu.
\end{split}
\end{align}

These relations can be checked by applying them to $x^{\boldsymbol\alpha}$. For example,
it follows from \eqref{eq:Lx0} that
\begin{align*}
 L_x^\mu\circ x_j(\vartheta_j+1)x^{\boldsymbol \alpha}
  &=L_x^\mu (\alpha_j+1)x_jx^{\boldsymbol\alpha}\\
  &=(\alpha_j+1)\frac{\Gamma(|\boldsymbol\alpha|+\mu+n+1)}{\Gamma(\alpha_1+1)\cdots\Gamma(\alpha_j+2)\cdots\Gamma(\alpha_n+1)}x_jx^{\boldsymbol\alpha}\\
  &=\frac{(|\boldsymbol\alpha|+\mu+n)\Gamma(|\boldsymbol\alpha|+\mu+n)}
    {\Gamma(\boldsymbol\alpha+1)}x_jx^{\boldsymbol\alpha}\\
  &=x_j(\vartheta_1+\cdots+\vartheta_n+\mu+n) L_x^\mu(x^{\boldsymbol\alpha}).
\end{align*}
We also note that \eqref{eq:Ktheta} is directly given by
\begin{align*}
(\vartheta_iK_x^\mu u)(x)&=\frac1{\Gamma(\mu)}\int_0^1 (1-|\mathbf t|)^{\mu-1} t_ix_i (\p_iu)(tx)\D t
 =(K_x^\mu\vartheta_i u)(x)
\end{align*}
and the equality
\begin{align*}
\tfrac{\p}{\p{t_i}}\bigl((1-|\mathbf t|)^{\mu-1} u(tx)\bigr)&=-(\mu-1)(1-|\mathbf t|)^{\mu-2} u(tx)+(1-t)^{\mu-1}x_i(\p_i u)(tx)
\intertext{shows}
x_i K_x^\mu\p_i&=(\mu-1) K_x^{\mu-1}
\intertext{and therefore}
 \mu\int_0^1 
  (1-|\mathbf t|)^{\mu-1} u(tx)\D t&=x_i\int_0^1 (1-|\mathbf t|)^{\mu}(\p_i u)(tx)\D t\\
  &=x_i\int_0^1 (1-|\mathbf t|)^{\mu-1}(1-t_1-\cdots-t_n)(\p_i u)(tx)\D t,\allowdisplaybreaks\\
x_i K_x^\mu \p_i u&=\mu K_x^\mu u + \sum_{\nu=1}^n\frac1{\Gamma(\mu)}\frac{x_i}{x_\nu}\int_0^1 
    (1-|\mathbf t|)^{\mu-1}(x_\nu \p_i u)(tx)\D t\\
 &=\mu K_x^\mu u +\sum_{\nu=1}^n \frac{x_i}{x_\nu} K_x^\mu \p_i x_\nu u - K_x^\mu u\allowdisplaybreaks\\
 &=(\mu-1) K_x^\mu u +\sum_{\nu=1}^n K_x^\mu \p_\nu x_\nu u\allowdisplaybreaks\\
 &=(\mu+n-1) K_x^\mu u +\sum_{\nu=1}^n \vartheta_\nu K_x^\mu u,
\end{align*}
which implies \eqref{eq:Kdelta}.

Thus we have the following theorem.
\begin{theorem}\label{thm:PKL}
Suppose $u(x)$ satisfies $Pu(x)=0$ with a certain $P\in W(x)$.

{\rm i)} \ Putting
\begin{align}
  Q&=\mathrm{R}P=\sum_{\boldsymbol\alpha,\,\boldsymbol\beta\in\mathbb N^n}
     a_{\boldsymbol\alpha,\boldsymbol\beta}x^{\boldsymbol\alpha}
 \p^{\boldsymbol\beta}.
\intertext{we choose $\boldsymbol\gamma\in\mathbb Z^n$ so that}
\label{eq:Qdtheta}
  \p^{\boldsymbol\gamma} Q&=\sum_{\boldsymbol\alpha,\,\boldsymbol\beta\in\mathbb N^n}
  c_{\boldsymbol\alpha,\boldsymbol\beta}\p^{\boldsymbol\alpha}
  \vartheta^{\boldsymbol\beta}\quad(c_{\boldsymbol\alpha,\boldsymbol\beta}\in\mathbb C).
\end{align}
Then we have $K_x^\mu(\p^\gamma Q) K_x^\mu u(x)=0$ with
\begin{equation}
 K_x^\mu(\sum c_{\boldsymbol\alpha,\boldsymbol\beta}\p^{\boldsymbol\alpha}
  \vartheta^{\boldsymbol\beta})
 :=\mathrm{R}\sum c_{\boldsymbol\alpha,\boldsymbol\beta}\Bigl(\prod_{k=1}^n
 (\tfrac1{x_k}(\vartheta_1+\cdots+\vartheta_n+\mu+n-1))^{\alpha_k}\Bigr)
 \vartheta^{\boldsymbol\beta}.
\end{equation}

{\rm ii)} \ 
Putting 
\begin{equation}
  Q=\mathrm{R}\tilde P,
\end{equation}
we choose $\boldsymbol\gamma\in\mathbb Z^n$ so that \eqref{eq:Qdtheta} holds.
Then we have $L_x^{\mu}(\p^{\boldsymbol\gamma}Q) L_x^\mu u(x)=0$ with
\begin{equation}
 L_x^\mu(\sum c_{\boldsymbol\alpha,\boldsymbol\beta}\p^{\boldsymbol\alpha}
  \vartheta^{\boldsymbol\beta})
  :=\mathrm{R}\sum c_{\boldsymbol\alpha,\boldsymbol\beta}\Bigl(\prod_{k=1}^n
 \bigl(x_k(\mu-\vartheta_1-\cdots-\vartheta_n)\bigr)^{\alpha_k}\Bigr)
  (-\boldsymbol\vartheta-1)^{\boldsymbol\beta}.
\end{equation}
\end{theorem}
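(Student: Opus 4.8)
The plan is to apply the relevant invertible transform to an annihilating equation for $u$, carry it past the differential operator using the intertwining relations established just before the statement, and then clear denominators with $\mathrm R$; the two parts run in parallel and differ only in which intertwining relations are invoked.

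For part i), since $Q=\mathrm RP=r(x)P$ with $r\in\mathbb C[x]\setminus\{0\}$, the hypothesis gives $Qu=0$, hence $(\p^{\boldsymbol\gamma}Q)u=0$ for every $\boldsymbol\gamma\in\mathbb N^n$. Choosing each $\gamma_k$ at least as large as every exponent of $x_k$ occurring in $Q$ and using $\p_k^{g}x_k^{a}=(\text{polynomial in }\vartheta_k)\p_k^{g-a}$ for $g\ge a$ together with $\vartheta_k\p_k=\p_k(\vartheta_k-1)$, one rewrites $\p^{\boldsymbol\gamma}Q=\sum c_{\boldsymbol\alpha,\boldsymbol\beta}\p^{\boldsymbol\alpha}\vartheta^{\boldsymbol\beta}$ with $\boldsymbol\alpha,\boldsymbol\beta\in\mathbb N^n$, which is \eqref{eq:Qdtheta}. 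Applying $K_x^\mu$ to $(\p^{\boldsymbol\gamma}Q)u=0$ and moving it to the right (every $\vartheta_j$ unchanged by \eqref{eq:Ktheta}, every $\p_k$ becoming $\tfrac1{x_k}(\vartheta_1+\cdots+\vartheta_n+\mu+n-1)$ by \eqref{eq:Kdelta}) yields
\[
 \Bigl[\textstyle\sum c_{\boldsymbol\alpha,\boldsymbol\beta}\prod_{k}\bigl(\tfrac1{x_k}(\vartheta_1+\cdots+\vartheta_n+\mu+n-1)\bigr)^{\alpha_k}\vartheta^{\boldsymbol\beta}\Bigr]K_x^\mu u
 =K_x^\mu\bigl((\p^{\boldsymbol\gamma}Q)u\bigr)=0 .
\]
Left multiplication by the polynomial realising $\mathrm R$ turns the bracket into $K_x^\mu(\p^{\boldsymbol\gamma}Q)\in W[x]$, which then annihilates $K_x^\mu u\in\mathcal O_0$.

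For part ii), by Lemma~\ref{lem:1} the hypothesis $Pu=0$ forces $\widetilde P\,\widetilde u=0$ for $\widetilde u(x)=\tfrac1{x_1\cdots x_n}u(\tfrac1{x_1},\dots,\tfrac1{x_n})$, so with $Q=\mathrm R\widetilde P$ and $\p^{\boldsymbol\gamma}Q=\sum c_{\boldsymbol\alpha,\boldsymbol\beta}\p^{\boldsymbol\alpha}\vartheta^{\boldsymbol\beta}$ formed exactly as in i) one has $(\p^{\boldsymbol\gamma}Q)\widetilde u=0$. By Proposition~\ref{prop:4}, $L_x^\mu u$ is a nonzero constant multiple of $J_x^{-\mu-n}(\widetilde u)$; hence applying $J_x^{-\mu-n}$ to $(\p^{\boldsymbol\gamma}Q)\widetilde u=0$ and using Proposition~\ref{prop:3} gives $\bigl[J_x^{-\mu-n}(\p^{\boldsymbol\gamma}Q)\bigr]L_x^\mu u=0$, and $\mathrm R$ applied to $J_x^{-\mu-n}(\p^{\boldsymbol\gamma}Q)$ is the operator $L_x^\mu(\p^{\boldsymbol\gamma}Q)$ appearing in the statement, so $L_x^\mu(\p^{\boldsymbol\gamma}Q)\,L_x^\mu u=0$.

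All of this is linear in $u$, and \eqref{eq:Ktheta}, \eqref{eq:Kdelta} and Proposition~\ref{prop:3} have already been verified on the monomials $x^{\boldsymbol\alpha}$, so the substance is bookkeeping rather than new analysis. The point requiring care is the treatment of the lowest-order terms: the factors $\tfrac1{x_k}$ in \eqref{eq:Kdelta} make the intermediate operators leave $W[x]$, and one must check that after summing over $\boldsymbol\alpha,\boldsymbol\beta$ and multiplying by the single polynomial realising $\mathrm R$ the result lies in $W[x]$ and still kills $K_x^\mu u$ (resp.\ $L_x^\mu u$); one should also note that enlarging $\boldsymbol\gamma$ merely left-multiplies $\p^{\boldsymbol\gamma}Q$ by further $\p_k$'s, which the recipes $K_x^\mu(\cdot)$ and $L_x^\mu(\cdot)$ convert into extra factors removed again by $\mathrm R$, so only admissibility of $\boldsymbol\gamma$ matters.
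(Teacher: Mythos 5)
Your argument follows the paper's own route: Theorem~\ref{thm:PKL} is stated immediately after the intertwining relations \eqref{eq:Ktheta}, \eqref{eq:Kdelta} and their $L$-analogues are established, and the intended proof is exactly the transfer-and-clear-denominators computation you describe, with part i) resting on \eqref{eq:Ktheta}--\eqref{eq:Kdelta} and part ii) on Lemma~\ref{lem:1} together with Propositions~\ref{prop:3} and \ref{prop:4}. Part i) is correct as written, including your point that left multiplication by the polynomial realising $\mathrm R$ preserves the property of annihilating $K_x^\mu u$.

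In part ii) your derivation is sound but your final identification is not literal. Applying Proposition~\ref{prop:3} with the exponent $-\mu-n$ supplied by Proposition~\ref{prop:4} yields the operator
$\mathrm R\sum c_{\boldsymbol\alpha,\boldsymbol\beta}\prod_{k}\bigl(x_k(-\mu-n-\vartheta_1-\cdots-\vartheta_n)\bigr)^{\alpha_k}(-\boldsymbol\vartheta-1)^{\boldsymbol\beta}$,
i.e.\ $\mathrm R\,J_x^{-\mu-n}(\p^{\boldsymbol\gamma}Q)$, whereas the displayed definition of $L_x^\mu(\cdot)$ in the statement carries the factor $x_k(\mu-\vartheta_1-\cdots-\vartheta_n)$ — it is the formula of Proposition~\ref{prop:3} with the unshifted $\mu$. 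The relation $L_x^\mu\circ x_j(\vartheta_j+1)=x_j(\vartheta_1+\cdots+\vartheta_n+\mu+n)\circ L_x^\mu$ derived just before the theorem, and the paper's own $F_4$ computation (where $\p_x$ becomes $x(\lambda_1+\lambda_2-\mu-2-\vartheta_x-\vartheta_y)$, the $-\mu'-n$ version with $\mu'=\mu-\lambda_1-\lambda_2$), both confirm that the correct factor is $x_k(-\mu-n-\vartheta_1-\cdots-\vartheta_n)$. So the mismatch traces to a slip in the printed statement rather than to your computation, but you should not assert that $\mathrm R\,J_x^{-\mu-n}(\p^{\boldsymbol\gamma}Q)$ \emph{is} the operator appearing in the statement; say instead that it is that formula with $\mu$ replaced by $-\mu-n$.
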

\begin{remark} 
i) \ In Theorem~\ref{thm:PKL} i), $\boldsymbol\gamma=(\gamma_1,\dots,\gamma_n)$ can be
taken by 
\begin{align*}
  \gamma_j&=\max\{0,\alpha_j-\beta_j|a_{(\alpha_1,\dots,\alpha_n),(\beta_1,\dots,\beta_n)}\ne0\}\qquad(1\le j\le n).
\end{align*}

ii) \ 
If $P\in W[x,y]$ in Theorem~\ref{thm:PKL}, it is clear that the theorem is valid 
under the assumption $c_{\boldsymbol\alpha,\boldsymbol\beta}\in W[y]$.

iii) \  
Suppose $u(x)\in\mathcal O_0$ satisfies  $P_1P_2u(x)=0$ with $P_1,\,P_2\in W[x]$
and $\{u\in\mathcal O_0\mid P_1u=0\}=\{0\}$.  Then $P_2u(x)=0$.

iv) \ 
Without the assumption \eqref{eq:positive} 
we can define transformations $K_{x,x\to x^{\mathbf p}}^{\mu,\boldsymbol\lambda}$ and
$L_{x,x\to x^{\mathbf p}}^{\mu,\boldsymbol\lambda}$ on $\mathcal O_0$ by 
\eqref{eq:Kco} and \eqref{eq:Lco}. 
Even in this case the results in this section are clearly valid.
\end{remark}

We will calculate some examples.  By the integral expression
\begin{align*}
 &F_1(\lambda_0,\lambda_1,\lambda_2,\mu;x,y)
 =\frac{\Gamma(\mu)}{\Gamma(\lambda_1)\Gamma(\lambda_2)}
  K_{x,y}^{\mu-\lambda_1-\lambda_2,\lambda_1,\lambda_2}(1-x-y)^{-\lambda_0}\\
 &\quad =C_1
   \int_{\substack{s>0,\,t>0\\ s+t<1}}
   (1-s-t)^{\mu-\lambda_1-\lambda_2-1}s^{\lambda_1-1}t^{\lambda_2-1}(1-sx-ty)^{-\lambda_0}\D s\D t
\end{align*}
corresponding to  \eqref{eq:FD} and \eqref{eq:Kdef},
we calculate the system of differential equations satisfied by 
$F_1(\lambda_0,\lambda_1,\lambda_2,\mu;x,y)$ as follows.
Putting
\begin{align*}
 h&:=x^{\lambda_1-1}y^{\lambda_2-1}(1-x-y)^{-\lambda_0},
\intertext{we have}
 \Ad(h)\p_x&=\p_x-\tfrac{\lambda_1-1}{x}-\tfrac{\lambda_0}{1-x-y},\\
 \Ad(h)\p_y&=\p_y-\tfrac{\lambda_2-1}{y}-\tfrac{\lambda_0}{1-x-y},\\
\Ad(h)(\vartheta_x+\vartheta_y)&=\vartheta_x+\vartheta_y-\tfrac{\lambda_0}{1-x-y}
 -(\lambda_1+\lambda_2-\lambda_0-2),\\
\Ad(h)(\vartheta_x+\vartheta_y-\p_x)&=\vartheta_x+\vartheta_y-\p_x+\tfrac{\lambda_1-1}x-(\lambda_1+\lambda_2-\lambda_0-2).
\intertext{Hence we put}
Q\ &\!\!:=\mathrm{R}\Ad(h)(\vartheta_x+\vartheta_y-\p_x)\\
 &=(\vartheta_x+1)(\vartheta_x+\vartheta_y -\lambda_1-\lambda_2+\lambda_0+2)
 -\p_x \vartheta_x+\lambda_1-1
\intertext{and we have}
K_x^{\mu-\lambda_1-\lambda_2}(Q)&=x(\vartheta_x+1)(\vartheta_x+\vartheta_y -\lambda_1-\lambda_2+\lambda_0+2)\\
 &\quad{} -(\vartheta_x+\vartheta_y+\mu-\lambda_1-\lambda_2+1)(\vartheta_x-\lambda_1+1),\\
\Ad(x^{1-\lambda_1}y^{1-\lambda_2})K_x^\mu(Q)&=x(\vartheta_x+\lambda_1)
   (\vartheta_x+\vartheta_y+\lambda_0)
   -(\vartheta_x+\vartheta_y+\mu-1)\vartheta_x\\
 &=x\bigl((\vartheta_x+\lambda_1)(\vartheta_x+\vartheta_y+\lambda_0)
   -\p_x(\vartheta_x+\vartheta_y+\mu-1)\bigr).
\end{align*}
Hence $F_1(\lambda_0,\lambda_1,\lambda_2,\mu;x,y)$ is a solution of the system
\begin{align}\label{eq:F1K}
\begin{cases}
 (\vartheta_x+\lambda_1)(\vartheta_x+\vartheta_y+\lambda_0)
   -\p_x(\vartheta_x+\vartheta_y+\mu-1)u_1=0,\\
 (\vartheta_y+\lambda_2)(\vartheta_x+\vartheta_y+\lambda_0)
   -\p_y(\vartheta_x+\vartheta_y+\mu-1)u_1=0.
\end{cases}
\end{align}
Next we consider the integral representation
\begin{align*}
 F_1(\lambda_0,\lambda_1,\lambda_2,\mu;x,y)&=\frac{\Gamma(\mu)}{\Gamma(\lambda_0)}
   K^{\mu-\lambda_0,\lambda_0}_{x,(x,y)\to(x,\tfrac xy)}(1-x)^{-\lambda_1}(1-y)^{-\lambda_2}
\\
  &=C_1'\int_0^1t^{\lambda_0-1}(1-t)^{\mu-\lambda_0}
  (1-tx)^{-\lambda_1}(1-ty)^{-\lambda_2}\D t
\end{align*}
\if0
Putting
\begin{align*}
  h&=x^{\lambda_0-1}(1-x)^{-\lambda_1}(1-\tfrac xy)^{-\lambda_2},
\intertext{we have}
 \Ad(h)\p_x&=\p_x-\tfrac{\lambda_0-1}x-\tfrac{\lambda_1}{1-x} -\tfrac{\lambda_2}{y-x},\\
 \Ad(h)\p_y&=\p_y-\tfrac xy\tfrac{\lambda_2}{y-x},\\
 Q&:=\mathrm{R}\Ad(h)(\vartheta_x-\vartheta_y)=\mathrm{R}(\vartheta_x-\vartheta_y
  -(\lambda_0-1)-\tfrac{\lambda_1 x}{1-x})\\
          &=(1-x)(\vartheta_x-\vartheta_y)-(\lambda_0-1)-(\lambda_0+\lambda_1-1)x.\\
 \mathrm{R}K_x^{\mu-\lambda_0}(\p_xQ)&=
   \mathrm{R}K_x^{\mu-\lambda_0}\bigl(
    (\p_x-\vartheta_x-1)(\vartheta_x-\vartheta_y)-(\lambda_0-1)\p_x-(\lambda_0+\lambda_1-1)
     (\vartheta_x+1)\bigr)\\
  &=\mathrm{R}\bigl((\p_x+\tfrac\mu x-\vartheta_x-1)(\vartheta_x-\vartheta_y)-
  (\lambda_0-1)(\p_x+\tfrac\mu x) -(\lambda_0+\lambda_1-1)
     (\vartheta_x+1)\bigr)\\
 &=(\vartheta_x+\mu)(\vartheta_x-\vartheta_y-\lambda_0+1)-x(\vartheta_x+1)(\vartheta_x-\vartheta_y
 +\lambda_0+\lambda_1+1)
\end{align*}
\fi
corresponding to \eqref{eq:FD2}. 
Since
\begin{align*}
 T_{(x,y)\mapsto(x,\tfrac xy)}&:\p_x\mapsto \tfrac1x(\vartheta_x+\vartheta_y),\  
   \p_y\mapsto -\tfrac yx\vartheta_y,\ \vartheta_x\mapsto\vartheta_x+\vartheta_y,\ 
   \vartheta_y\mapsto -\vartheta_y,
\end{align*}
we have 
\begin{align*}
\p_x&\xrightarrow{\Ad((1-x)^{-\lambda_1}(1-y)^{-\lambda_2})}
   \p_x -\tfrac{\lambda_1}{1-x}
  \ \xrightarrow{T_{(x,y)\to(x,\frac xy)}}
   \tfrac1x(\vartheta_x+\vartheta_y)-\tfrac{\lambda_1}{1-x}\\
 &\xrightarrow{\Ad(x^{\lambda_0-1})}\ 
  \tfrac1x(\vartheta_x+\vartheta_y)-\tfrac{\lambda_0-1}x-\tfrac{\lambda_1}{1-x}
 \\&
 \xrightarrow{\mathrm{R}}(1-x)(\vartheta_x+\vartheta_y-\lambda_0+1)-\lambda_1x\\
 &\xrightarrow{\p_x}
   (\p_x-\vartheta_x-1)(\vartheta_x+\vartheta_y-\lambda_0+1)-\lambda_1(\vartheta_x+1)\\
 &\xrightarrow{K_x^{\mu-\lambda_0}}
   (\p_x+\tfrac{\mu-\lambda_0}x-\vartheta_x-1)
   (\vartheta_x+\vartheta_y-\lambda_0+1)-\lambda_1(\vartheta_x+1)\\
  &\xrightarrow{\Ad(x^{1-\lambda_0})}
  (\p_x+\tfrac{\mu-1}x-\vartheta_x-\lambda_0)(\vartheta_x+\vartheta_y)
   -\lambda_1(\vartheta_x-\lambda_0)\\
  &\xrightarrow{T_{(x,y)\to(x,\frac xy)}}
     (\tfrac1x(\vartheta_x+\vartheta_y)+\tfrac{\mu-1} x-\vartheta_x-\vartheta_y-\lambda_0)
    \vartheta_x
   -\lambda_1(\vartheta_x-\lambda_0)\\
 &=\p_x(\vartheta_x+\vartheta_y+\mu-1)-(\vartheta_x+\lambda_1)(\vartheta_x+\vartheta_y+\lambda_0).
\end{align*}
Thus we also get the system 
\eqref{eq:F1K} characterizing $F_1(\lambda_0,\lambda_1,\lambda_2,\mu;x,y)$.

We have similar calculations for other Appell's hypergeometric series as follows.
\begin{align*}
&F_2(\lambda_0;\mu_1,\mu_2;\lambda_1,\lambda_2;x,y)
 =\frac{\Gamma(\lambda_1)\Gamma(\lambda_2)}{\Gamma(\mu_1)\Gamma(\mu_2)}
  K_x^{\lambda_1-\mu_1,\mu_1}K_y^{\lambda_2-\mu_2,\mu_2}(1-x-y)^{-\lambda_0}\\
&=C_2
 \int_0^1\int_0^1
 s^{\mu_1}t^{\mu_2}(1-s)^{\lambda_1-\mu_1-1}(1-t)^{\lambda_2-\mu_2-1}(1-sx-ty)^{-\lambda_0}
  \tfrac{\D s}s\tfrac{\D t}t,
\allowdisplaybreaks\\
&\p_x\xrightarrow
 {\Ad(x^{\mu_1-1}y^{\mu_2-1}(1-x)^{\lambda_1-\mu_1-1}(1-y)^{\lambda_2-\mu_2-1}}
 \p_x -\tfrac{\mu_1-1}x+\tfrac{\lambda_1-\mu_1-1}{1-x}\allowdisplaybreaks\\
&\xrightarrow{\textrm{R}}x(1-x)\p_x+(\lambda_1-2)x-(\mu_1-1)\allowdisplaybreaks\\
&\xrightarrow{\p_x}\p_x x(-\vartheta_x+\lambda_1-2)+\p_x(\vartheta_x-\mu_1+1)\\
&\xrightarrow{J_{x,y}^{-\lambda_0}}-\vartheta_x(\vartheta_x+1+\lambda_1-2)
  +x(-\lambda_0-\vartheta_x-\vartheta_y)(-1-\vartheta_x-\mu_1+1)\\
&\quad =x\Bigl((\vartheta_x+\mu_1)(\vartheta_x+\vartheta_y+\lambda_0)
  -\p_x(\vartheta_x+\lambda_1-1)\bigr)\allowdisplaybreaks
\intertext{and}
&F_3(\lambda_1,\lambda_2;\lambda_1',\lambda_2';\mu;x,y)=
 \frac{\Gamma(\mu)}{\Gamma(\lambda_1)\Gamma(\lambda_2)}K_{x,y}^{\mu-\lambda_1-\lambda_2}
 (1-x)^{-\lambda_1'}(1-y)^{-\lambda_2'}\\
&=C_3
 \int_{\substack{s>0,t>0\\s+t<1}}
 s^{\lambda_1}t^{\lambda_2}(1-s-t)^{\mu-1}(1-sx)^{-\lambda_1'}(1-ty)^{-\lambda_2'}
  \tfrac{\D s}s\tfrac{\D t}t,\\
&\p_x\xrightarrow
  {\mathrm{R}\Ad(x^{\lambda_1-1}y^{\lambda_2-1}(1-x)^{-\lambda_1'}(1-y)^{-\lambda_2'}}
 x(1-x)\p_x+(\lambda_1-\lambda_1'-1)x-(\lambda_1-1)\\
&\xrightarrow{\p_x}
 \p_x x(-\vartheta_x+\lambda_1-\lambda_1'-1)+\tfrac1x(\vartheta_x+\vartheta_y+\mu-\lambda_1'-\lambda_2-1)(\vartheta_x-\lambda_1+1)\\
&\xrightarrow{\Ad(x^{1-\lambda_1}y^{1-\lambda_2}}
 -(\vartheta_x+\lambda_1)(\vartheta_x+\lambda_1')+\p_x(\vartheta_x+\vartheta_y+\mu-1)\allowdisplaybreaks
\intertext{and}
&F_4(\mu,\lambda_0;\lambda_1,\lambda_2;x,y)=\frac{\Gamma(\lambda_1)\Gamma(\lambda_2)}{\Gamma(\mu)}
 L_{x,y}^{\mu-\lambda_1-\lambda_2,\lambda_1,\lambda_2}(1-x-y)^{-\lambda_0}\\
&=C_4
  \int_{\frac13-i\infty}^{\frac13+i\infty}
  s^{\lambda_1}t^{\lambda_2}(1-s-t)^{\lambda_1+\lambda_2-\mu-2}(1-sx-ty)^{-\lambda_0}\tfrac{\D s}s\tfrac{\D t}t,\allowdisplaybreaks\\
&\p_x\xrightarrow{\Ad((1-x-y)^{-\lambda_0})}\p_x-\tfrac{\lambda_0}{1-x-y}\allowdisplaybreaks\\
&\p_x-\vartheta_x-\vartheta_y-\lambda_0\quad(\in\ker(1-x-y)^{-\lambda_0})\\
&\xrightarrow{\Ad(x^{\lambda_1-1}y^{\lambda_2-1})}
 \p_x-\tfrac{\lambda_1-1}x-(\vartheta_x+\vartheta_y+\lambda_0-\lambda_1+1-\lambda_2+1)\\
&\xrightarrow{\frac1{xy}T_{(x,y)\mapsto(\frac1x,\frac1y)}}
 -x(\vartheta_x+1)-(\lambda_1-1)-(\lambda_1-1)x+\vartheta_x+\vartheta_y-\lambda_0+\lambda_1+\lambda_2\allowdisplaybreaks\\
&\xrightarrow{\p_x}-\p_x x(\vartheta_x+\lambda_1)
  +\p_x(\vartheta_x+\vartheta_y-\lambda_0+\lambda_1+\lambda_2)\\
&\xrightarrow{\Ad(L_{x,y}^{\mu-\lambda_1-\lambda_2})}\vartheta_x(-\vartheta_x-1+\lambda_1)\\
&\quad{}+x(\lambda_1+\lambda_2-\mu-2-\vartheta_x-\vartheta_y)(-\vartheta_x-\vartheta_y-\lambda_0
+\lambda_1+\lambda_2\allowdisplaybreaks\\
&\xrightarrow{\p_x} -\p_x x(\vartheta_x+\lambda_1)+\p_x(\vartheta_x+\vartheta_y-\lambda_0+\lambda_1+\lambda_2)\allowdisplaybreaks\\
&\xrightarrow{L_{x,y}^{\mu-\lambda_1-\lambda_2}}\vartheta_x(-\vartheta_x-1-\lambda_1)\\
&\quad{}+x(\lambda_1+\lambda_2-\mu-2-\vartheta_x-\vartheta_y)(-\vartheta_x-\vartheta_y-\lambda_0+\lambda_1+\lambda_2-2)\allowdisplaybreaks\\
&\xrightarrow{\Ad(x^{1-\lambda_1}y^{1-\lambda_2})}
x\Bigl((\vartheta_x+\vartheta_y+\mu)(\vartheta_x+\vartheta_y+\lambda_0)
 -\p_x(\vartheta_x+\lambda_1-1)
\Bigr).
\end{align*}
Here $C_1$. $C_1'$, $C_2$, $C_3$ and $C_4$ are constants easily obtained from the integral 
formula in \S\ref{sec:Int} with putting $x=y=0$. Hence
\begin{align}
 \begin{cases}
  u_2=F_2(\lambda_0;\mu_1,\mu_2;\lambda_1,\lambda_2;x,y),\\
  u_3=F_3(\lambda_1,\lambda_2;\lambda_1',\lambda_2';\mu;x,y),\\
  u_4=F_4(\mu,\lambda_0; \lambda_1,\lambda_2;x,y)\\
 \end{cases}
\end{align}
are solutions of the system
\begin{align}
\begin{cases}
\bigl((\vartheta_x+\mu_1)(\vartheta_x+\vartheta_y+\lambda_0)-\p_x(\vartheta_x+\lambda_1-1)\bigr)u_2=0,\\
\bigl((\vartheta_y+\mu_2)(\vartheta_x+\vartheta_y+\lambda_0)-\p_y(\vartheta_y+\lambda_2-1)\bigr)u_2=0,
\end{cases}
\\
\begin{cases}
\bigl((\vartheta_x+\lambda_1)(\vartheta_x+\lambda_1')-\p_x(\vartheta_x+\vartheta_y+\mu-1)\bigr)u_3=0,\\
\bigl((\vartheta_y+\lambda_2)(\vartheta_x+\lambda_2')-\p_y(\vartheta_x+\vartheta_y+\mu-1)\bigr)u_3=0,
\end{cases}
\\
\begin{cases}
\bigl((\vartheta_x+\vartheta_y+\mu)(\vartheta_x+\vartheta_y+\lambda_0)-\p_x(\vartheta_x+\lambda_1-1)\bigr)u_4=0,\\
\bigl((\vartheta_x+\vartheta_y+\mu)(\vartheta_x+\vartheta_y+\lambda_0)-\p_y(\vartheta_y+\lambda_2-1)\bigr)u_4=0.
\end{cases}
\end{align}
\begin{remark}
The above systems are directly obtained from the adjacent relations of the coefficients of 
Appell's hypergeometric series. Here we get them by the transformations of systems of 
differential equations corresponding to integral transformations of functions discussed 
in this paper so that it can be applied to general cases.
\end{remark}
\section{KZ equations}\label{sec:KZ}
A Pfaffian system
\begin{align}
 \D u&=\sum_{0\le i<j\le q}\!\!A_{i,j}\frac{\D(x_i-x_j)}{x_i-x_j}u
\end{align}
with an unknown $N$ vector $u$ and constant square matrices $A_{i,j}$ of size $N$ 
is called a KZ (Knizhnik-Zamolodchikov type)
equation of rank $N$ (cf.~\cite{KZ}), which equals the system of the equations
\begin{align}
 \mathcal M\,:\,\frac{\partial u}{\partial x_i}&=\sum_{\substack{0\le\nu\le q\\ \nu\ne i}}
\frac{A_{i,\nu}}{x_i-x_\nu}u
 \qquad(i=0,\dots,q)\label{eq:KZ}
\end{align}
with denoting $A_{j,i}=A_{i,j}$.
The matrix $A_{i,j}$ is called the \emph{residue matrix} of $\mathcal M$ at $x_i=x_j$.
Here we always assume the \emph{integrability condition}
\begin{equation}\label{eq:Com0}
 \begin{cases}
 [A_{i,j},A_{k,\ell}]=0&(\forall\{i,j,k,\ell\}\subset
 \{0,\dots,q\}),\\
 [A_{i,j},A_{i,k}+A_{j,k}]=0&(\forall\{i,j,k\}\subset\{0,\dots,q\}),
 \end{cases}
\end{equation}
which follows from the condition $\D\D u=0$.
Here $i,\,j,\,k,\,\ell$ are mutually different indices:
\begin{definition} Using the notation
\begin{align*}
 A_{i,i}&=A_\emptyset=A_i=0,\quad A_{i,j}=A_{j,i}\quad(i,\,j\in\{0,1,\dots,q+1\}),
  \allowdisplaybreaks\\
 A_{i,q+1}&:=-\sum_{\nu=0}^n A_{i,\nu},\allowdisplaybreaks\\
 A_{i_1,i_2,\dots,i_k}&:=\sum_{1\le \nu<\nu'\le k}A_{i_\nu,i_{\nu'}}\quad
(\{i_1,\dots,i_k\}\subset\{0,\dots,q+1\}), 
\end{align*}
we have
\begin{equation}
 [A_I,A_J]=0\quad\text{if }I\cap J=\emptyset\text{ or }I\subset J\text{ with }
 I,\,J\subset \{0,\dots,q+1\}.
\end{equation}

The matrix $A_{i,j}$ is called the residue matrix of $\mathcal M$ at
$x_i=x_j$ and $x_{q+1}$ corresponds to $\infty$ in $P^1_{\mathbb C}$.
\end{definition}

We note that any rigid irreducible Fuchsian system
\begin{equation}\label{eq:ODE}
 \mathcal N : \frac{\D u}{\D x}=\sum_{i=1}^q \frac{B_i}{x-x_i}u
\end{equation}
can be extended to KZ equation $\mathcal M$ with $x=x_0$ and $B_i=A_{0,i}$, which follows 
from the result by Haraoka~\cite{Ha} extending a middle convolution on KZ equations.

We assume that $\mathcal M$ is irreducible at a generic value of the holomorphic parameter 
contained in $\mathcal M$. Then it is shown in 
\cite[\S1]{Okz} that $A_{0,\dots,q}$ is a scalar matrix $\kappa I_N$ 
with $\kappa\in\mathbb C$ and by the gage transformation $u\mapsto (x_{q-1}-x_q)^{-\kappa} u$
we may assume that $\mathcal M$ is \emph{homogeneous}, which means
\begin{equation}\label{eq:homog}
 A_{i_0,\dots,i_q}=0\quad(0\le i_0<i_1<\dots<i_q\le q+1).
\end{equation}
Then the symmetric group $\mathfrak S_{q+2}$ which is identified with the permutation group
of the set of indices $\{0,1,\dots,q+1\}$ 
naturally acts on the space of the homogeneous KZ equations (cf.~\cite[\S6]{Okz}) :

\begin{tikzpicture}
 [root/.style={draw,circle,inner sep=0mm,minimum size=3mm}]
\node[root] (a0) at (0,0) {};
\node at (0,0.4) {$x_0$};
\node at (0,-0.4) {$x$};
\node[root] (a1) at (1.5,0) {};
\node at (1.5,0.4)  {$x_1$};
\node at (1.5,-0.4) {$y_1$};
\node[root] (a2) at (3,0) {};
\node at (3,0.4)  {$x_2$};
\node at (3,-0.4) {$y_2$};
\node[root] (b) at (6,0) {};
\node at (6,0.4)  {$x_{q-2}$};
\node at (6,-0.4) {$y_{q-2}$};
\node[root] (b0) at (7.5,0) {};
\node at (7.5,0.4)  {$x_{q-1}$};
\node at (7.5,-0.4) {$1$};
\node[root] (b1) at (9,0) {};
\node at (9,0.4) {$x_{q}$};
\node at (9,-0.4) {$0$};
\node[root] (b2) at (10.5,0) {};
\node at (10.5,0.4) {$x_{q+1}$};
\node at (10.5,-0.4) {$\infty$};
\draw (a0)--(a1)--(a2)--(4,0)  (5,0)--(b)--(b0)--(b1)--(b2);
\draw[dotted] (4,0)--(5,0);
\end{tikzpicture}

$(0,1)$\qquad\quad\ \  : \ $x\ \leftrightarrow \, y_1$,

$(i,i+1)$\qquad\ \,: \ $y_i\leftrightarrow \, y_{i+1}\qquad(1\le i\le q-3)$,

$(q-2,q-1)$ \,: \ $(x,y_1,\dots,y_{q-1},y_{q-2})\leftrightarrow
 (\tfrac{x}{y_{q-2}},\tfrac{y_1}{y_{q-2}},\dots,\tfrac{y_{q-1}}{y_{q-2}},\frac1{y_{q-2}})$,

$(q-1,q)$\qquad : \ $(x,y_1,\dots,y_{q-1},y_{q-2})\leftrightarrow(1-x,1-y_1,\dots,1-y_{q-1},1-y_{q-2})$,

$(q,q+1)$\qquad : \ $(x,y_1,\dots,y_{q-1},y_{q-2})\leftrightarrow(\tfrac1x,\tfrac1{y_1},\dots,
\tfrac1{y_{q-1}},\tfrac1{y_{q-2}})$.

\noindent
Here we put $(x_0,\dots,x_{q+1})=(x,y_1,\dots,y_{q-1},1,0,\infty)$ by a transformation 
$\mathbb P^1\ni x\mapsto ax+b$ which keeps the residue matrices $A_{i,j}$.

\medskip
For simplicity we assume $n=q-1=2$ and put $(x_0,x_1,x_2,x_3,x_4)=(x,y,1,0,\infty)$. 
Then \eqref{eq:homog} means
\begin{equation}
A_{01}+A_{01}+A_{03}+A_{12}+A_{13}+A_{23}=0
\end{equation}
and the five residue matrices $A_{01}$, $A_{01}$, $A_{03}$, $A_{12}$ and $A_{13}$ uniquely 
determine the other five residue matrices $A_{23}$ and $A_{i4}$ with $0\le i\le 3$ 
and the action of $\mathfrak S_5$ is generated by the 4 involutions
\begin{align*}
(x_0,x_1,x_2,x_3,x_4)  \qquad& \to \qquad(x,y,1,0,\infty),\\
x_0\leftrightarrow x_1 \qquad& \to \qquad(x,y)\leftrightarrow(y,x),\\
x_1\leftrightarrow x_2 \qquad& \to \qquad (x,y)\leftrightarrow(\tfrac xy,\tfrac1y),\\ 
x_2\leftrightarrow x_3 \qquad& \to \qquad(x,y)\leftrightarrow(1-x,1-y),\\
x_3\leftrightarrow x_4 \qquad& \to \qquad(x,y)\leftrightarrow(\tfrac1x,\tfrac1y).
\end{align*}
In particular, the KZ system is determined by the equation
\begin{align}\label{eq:KZxy}
  \mathcal M : 
  \begin{cases}
   \displaystyle\frac{\p u}{\p x}=
   \frac{A_{01}}{x-y}u+\frac{A_{02}}{x-1} u + \frac{A_{03}}{x} u,\\
   \displaystyle\frac{\p u}{\p y}=
   \frac{A_{01}}{y-x}u+\frac{A_{12}}{y-1} u + \frac{A_{13}}{y} u.
  \end{cases}
\end{align}
\begin{remark}\label{rem:blowup}
The coordinate transformations corresponding to the involutions
\begin{align*}
 (x_0,x_1,x_2,x_3,x_4)\leftrightarrow   (x_2,x_1,x_0,x_4,x_3)  \qquad& \to \qquad
  (x,y)\leftrightarrow (x,\tfrac xy)\\
 (x_0,x_1,x_2,x_3,x_4)\leftrightarrow   (x_0,x_2,x_1,x_4,x_3)  \qquad& \to \qquad
  (x,y)\leftrightarrow (\tfrac yx,y)
\end{align*}
give the local coordinates of the blowing up of the singularities of the equation 
\eqref{eq:KZxy} at the origin:
\smallskip


\begin{tikzpicture}
 [root/.style={draw,circle,inner sep=0mm,minimum size=1.8mm}]
\node[root] (a1) at (0,0) {};
\node[root] (a2) at (1.6,0) {};
\node[root] (a3) at (3.2,0) {};
\node[root] (a4) at (4.8,0) {};
\node at (-1,-0.35) {$(x,y)\mapsto$};
\node at (-1,0.25) {$\mathfrak S_5\ni$};
\node at (0,-0.35)  {$(y,x)$};
\node at (0, 0.25)  {$x_0\leftrightarrow x_1$};
\node at (1.6,-0.35)  {$(\frac xy,\frac 1y)$};
\node at (1.6, 0.25)  {$x_1\leftrightarrow x_2$};
\node at (3.2,-0.35)  {$(1-x,1-y)$};
\node at (3.2, 0.25)  {$x_2\leftrightarrow x_3$};
\node at (4.8,-0.35)  {$(\frac1x,\frac1y)$};
\node at (4.8, 0.25)  {$x_3\leftrightarrow x_4$};
\draw (a1)--(a2)--(a3)--(a4);
\end{tikzpicture}

\vspace{-1.3cm}\hspace{7.6cm}
\if\ORG \ \ \fi
\begin{tikzpicture}
\path[fill=black!40] (0,0)--(1/3,-1/10)--(1/3,1/2) --cycle;
\path[fill=black!40] (0,2.15)--(1/3,2.15)--(1/3,3/4) --(0,3/4) --cycle;
\draw (0,-0.1)--(0,2.3) (1,-0.1)--(1,2.3)  (2,-0.1)--(2,2.3)
      (-0.1,0)-- (2.3,0)  (-0.1,1)-- (2.3,1)  (-0.1,2)-- (2.3,2)
      (-0.1,-0.1)--(2.3,2.3);
\node at (-0.5,0.1) {$y=0$};
\node at (-0.5,1.1) {$y=1$};
\node at (-0.5,2.1) {$y=\infty$};
\node at (0,-0.3) {$x=0$};
\node at (1,-0.3) {$x=1$};
\node at (2,-0.3) {$x=\infty$};
\node at (1.5,1.4) {$x=y$};
\node at (0.45,1.7) {$\downarrow$};
\node at (0.45,0.2) {$\uparrow$};
\end{tikzpicture}

\vspace{-1.5cm}
\hspace{2.7cm}
$(x,y)\ \leftrightarrow\ (x,\frac xy) $

\hspace{0.5cm} 
$\{|x|<\epsilon,\ |y|<C|x|\}\ \leftrightarrow\  \{|x|<\epsilon,\ |y|>C^{-1}\}$

\hspace{2.05cm}
$x=y=0\ \leftrightarrow\ x=0$
\end{remark}

Now we review the result in \cite{DR, DR2, Ha} by using the transformations defined in this paper.
The convolution of the KZ equation \eqref{eq:KZ} corresponds to the transformation defined by 
\[
 (\tilde{\mathrm{mc}}_\mu u)(x,y):=\begin{pmatrix}
  I_{x,0}^{\mu+1} \frac{u(x,y)}{x-y}\\
  I_{x,0}^{\mu+1} \frac{u(x,y)}{y}\\
  I_{x,0}^{\mu+1} \frac{u(x,y)}{x}
\end{pmatrix}=
\begin{pmatrix}
    \tfrac1{\Gamma(\mu+1)}\int_0^x (1-t)^\mu \frac{u(t,y)}{t-y}\D t\\
    \tfrac1{\Gamma(\mu+1)}\int_0^x (1-t)^\mu \frac{u(t,y)}{y}\D t\\
    \tfrac1{\Gamma(\mu+1)}\int_0^x (1-t)^\mu \frac{u(t,y)}{t}\D t
\end{pmatrix}.
\]
We put $\tilde K_x^\mu = x^{-\mu}\circ \tilde{\mathrm{mc}}_\mu$ and 
$\tilde K_x^{\mu,\lambda}=x^{-\lambda}\circ \tilde K_x^\mu\circ x^{\lambda}$.
Then
\begin{align}\label{eq:IKZ}
 (\tilde K_x^{\mu,\lambda} u)(x,y)
 &=\begin{pmatrix} K_x^{\mu+1,\lambda}\frac {xu(x,y)}{x-y}\\
   K_x^{\mu+1,\lambda}\frac {xu(x,y)}{x-1}\\
   K_x^{\mu+1,\lambda} u(x,y)\\
\end{pmatrix}.
\end{align}
Putting $\tilde u=\tilde K_x^{\mu,\lambda} u$ for a solution $u$ of the KZ equation \eqref{eq:KZ}, 
we have the KZ equation
\begin{align}
 \frac{\p\tilde u}{\p x_i}&=\sum_{\substack{0\le\nu\le 3\\\nu\ne i}}\frac{\tilde A_{i,\nu}}{x_i-x_\nu}\tilde u\label{eq:KKZ}
\end{align}
satisfied by $\tilde u$.

Since this equation is reducible in general, we consider the reduced equation
\begin{align}
\bar{\mathcal M} : 
 \frac{\p\bar u}{\p x_i}&=\sum_{\substack{0\le\nu\le 3\\\nu\ne i}}\frac{\bar A_{i,\nu}}{x_i-x_\nu}\bar u.\label{eq:RKKZ}
\end{align}
The residue matrices $\tilde A_{i,j}$ and $\bar A_{i,j}$ are obtained 
from the results in \cite{DR, DR2, Ha}: 
{\small%
\begin{equation}\label{eq:KxKZ}
\begin{split}
 \tilde A_{01}&=\begin{pmatrix}
  \mu+A_{01} & A_{02} & A_{03}+\lambda\\
 0 & 0 & 0\\
 0 & 0 & 0
\end{pmatrix},\qquad
\tilde A_{02}=\begin{pmatrix}
 0 & 0 & 0\\
 A_{01} &  \mu+A_{02} & A_{03}+\lambda\\
 0 & 0 & 0
\end{pmatrix},\\ 
\tilde A_{03}&=\begin{pmatrix}
 - \mu-\lambda & 0 & 0\\
 0 & - \mu-\lambda & 0\\
 A_{01} & A_{02} & A_{03}
\end{pmatrix},\qquad 
\tilde A_{04}=\begin{pmatrix}
 -A_{01}+\lambda & -A_{02} & -A_{03}-\lambda\\
 -A_{01} & -A_{02}+\lambda & -A_{03}-\lambda\\
 -A_{01} & -A_{02} & -A_{03}
\end{pmatrix},\\
\tilde A_{12}&=\begin{pmatrix}
 A_{12}+A_{02} & -A_{02} & 0\\
 -A_{01} & A_{12}+A_{01} & 0\\
 0 & 0 & A_{12}
\end{pmatrix},\ \ 
\tilde A_{13}=\begin{pmatrix}
 A_{13}+A_{03}+\lambda & 0 & -A_{03}-\lambda\\
 0 & A_{13} & 0\\
 -A_{01} & 0 & A_{01}+A_{13}
\end{pmatrix},\\
\tilde A_{14}&=
\begin{pmatrix}
 A_{23}-\mu-\lambda & 0 & 0\\
 A_{01} & A_{02}+A_{03}+A_{23} & 0\\
 A_{01} & 0 & A_{02}+A_{03}+A_{23}
\end{pmatrix},
\\
\tilde A_{23}&=
\begin{pmatrix}
  A_{23} & 0 & 0\\
 0 & A_{03}+A_{23}+\lambda & -A_{03}-\lambda\\
 0 & -A_{02} & A_{02}+A_{23}
\end{pmatrix},\allowdisplaybreaks\\
 \tilde A_{24}&=\begin{pmatrix}
 A_{01}+A_{13}+A_{03} & A_{02} & 0\\
 0 & A_{13}-\mu-\lambda &0\\
 0 & A_{02} & A_{01}+A_{13}+A_{03}
\end{pmatrix},\allowdisplaybreaks\\
 \tilde A_{34}&=\begin{pmatrix}
 A_{12}+A_{01}+A_{02}+ \mu& 0 & A_{03}+\lambda\\
 0 & A_{12}+A_{01}+A_{02}+ \mu & A_{03}+\lambda\\
 0 & 0 & A_{12}
\end{pmatrix}.%
\end{split}\end{equation}}
Here we denote $A_{01}=A_{0,1}$ etc.\ for simplicity.

Then the subspace
\begin{align}
\begin{split}
 \mathcal L&:=\begin{pmatrix}\ker A_{01}\\ \ker A_{02}\\ \ker A_{03}+\lambda\end{pmatrix}
 +\ker(\tilde A_{04}-\mu-\lambda)\\
 &=\begin{pmatrix}\ker A_y\\ \ker A_1\\ \ker A_0+\lambda\end{pmatrix}
 +\ker\begin{pmatrix}A_y+\mu&A_1&A_0+\lambda\\ A_y&A_1+\mu &A_0+\lambda\\ A_y&A_1&A_0+\mu+\lambda\end{pmatrix}
\end{split}\end{align}
of $\mathbb C^{3N}$ satisfies $\tilde A_{i,j}\mathcal L\subset \mathcal L$.
We define $\bar A_{i,j}$ the square matrices of size $3N-\dim\mathcal L$ which correspond to linear transformations induced by $\tilde A_{i,j}$, respectively,  on the quotient space $\mathbb  C^{3N}/\mathcal L$.

It is known that if the equation \eqref{eq:ODE} is irreducible, then the corresponding ordinary differential equation defined by $\bar A_{0,1}$, $\bar A_{0,2}$ and $\bar A_{0,3}$ is 
irreducible (cf.~\cite{DR}) and so is the equation
\begin{align}
  \begin{cases}
   \displaystyle\frac{\p\bar u}{\p x}=
   \frac{\bar A_{01}}{x-y}\bar u+\frac{\bar A_{02}}{x-1}\bar u + \frac{\bar A_{03}}{x}\bar u,\\
   \displaystyle\frac{\p\bar u}{\p y}=
   \frac{\bar A_{01}}{y-x}\bar u+\frac{\bar A_{12}}{y-1}\bar u + \frac{\bar A_{13}}{y}\bar u.
  \end{cases}
\end{align}

Note that if $\lambda$ and $\mu$ are generic, we have
\begin{equation*}
 \mathcal L=\begin{pmatrix}
  \ker A_{01}\\
  \ker A_{02}\\
  0
 \end{pmatrix}.
\end{equation*}

Next we examine the transformations
\begin{align}
\begin{split}
\tilde K_{y}^{\mu,\lambda}&:=T_{(x,y)\mapsto(y,x)}\circ \tilde K_x^{\mu,\lambda}\circ T_{(x,y)\mapsto(y,x)},\\
\tilde K_{x,y}^{\mu,\lambda}&:=T_{(x,y)\mapsto(x,\frac xy)}\circ \tilde K_x^{\mu,\lambda}\circ T_{(x,y)\mapsto(x,\frac xy)}.
\end{split}
\end{align}
 Note that $(x,y)\mapsto (y,x)$ and $(x,y)\mapsto (x,\frac xy)$ correspond to 
$(x_0,x_1,x_2,x_3,x_4)\mapsto(x_1,x_0,x_2,x_3,x_4)$ and 
$(x_0,x_1,x_2,x_3,x_4)\mapsto(x_2,x_1,x_0,x_4,x_3)$, respectively.
Hence the KZ equations satisfied by $\tilde K_y^{\mu,\lambda}u$ and $\tilde K_{x,y}^{\mu,\lambda}u$ are easily 
obtained from their definition and the equation satisfied by $\tilde K_x^{\mu,\lambda}u$. 
We  consider the equation satisfied by $\tilde K_{x,y}^{\mu,\lambda}u$. 

Putting
\begin{equation}\label{eq:I2KZ}
\tilde u(x,y)=(\tilde K_{x,y}^{\mu,\lambda}u)(x,y) 
 =\left(\begin{smallmatrix}
  T_{(x,y)\mapsto(x,\frac xy)} K_{x}^{\mu+1,\lambda}\frac x{x-y}u(x,\frac xy)\\
  T_{(x,y)\mapsto(x,\frac xy)} K_{x}^{\mu+1,\lambda}\frac x{x-1}u(x,\frac xy)\\
  T_{(x,y)\mapsto(x,\frac xy)} K_{x}^{\mu+1,\lambda}u(x,\frac xy)
\end{smallmatrix}
\right), 
\end{equation}
the residue matrices of KZ equation satisfied by $\tilde u(x,y)$ are given by
{\small\begin{equation}\label{eq:KxyKZ}
\begin{split}
 &\tilde A_{01}=\begin{pmatrix}
 A_{01}+A_{02}&-A_{02}& 0 \\
 -A_{12}&A_{01}+A_{12}& 0 \\
0& 0&A_{01} 
\end{pmatrix},\quad
\tilde A_{02}=\begin{pmatrix}
0& 0& 0 \\
 A_{12}& A_{02}+\mu&A_{24}+\lambda \\
0& 0& 0 
\end{pmatrix},\\
&\tilde A_{03}=\begin{pmatrix}
 A_{03}&A_{02}&0\\
 0&A_{14}-\mu-\lambda&0\\
 0&A_{02}&A_{03}
\end{pmatrix},\quad
 \tilde A_{04}
=\begin{pmatrix}
 A_{04}+A_{24}& 0&0 \\
0&A_{04}+A_{24}+\lambda& -A_{24}-\lambda \\
0& -A_{02}&A_{02}+A_{04}
\end{pmatrix},\\
&\tilde A_{12}=\begin{pmatrix}
  A_{12}+\mu&A_{02}&A_{24}+\lambda \\
0& 0& 0 \\
0& 0& 0 
\end{pmatrix},\quad 
\tilde A_{13}=
\begin{pmatrix}
 A_{04}-\mu-\lambda&0 & 0\\
 A_{12}&A_{13}& 0 \\
 A_{12}&0&A_{13} 
\end{pmatrix},\\
&\tilde A_{14}=\begin{pmatrix}
 A_{14}+A_{24}+\lambda& 0& -A_{24}-\lambda \\
0&A_{14}+A_{24}& 0 \\
-A_{12}& 0&A_{12}+A_{14} 
\end{pmatrix},\\
&\tilde A_{23}=\begin{pmatrix}
 -A_{12}+\lambda&-A_{02}&-A_{24}-\lambda \\
 -A_{12}&-A_{02}+\lambda&-A_{24}-\lambda \\
 -A_{12}&-A_{02}&-A_{24} 
\end{pmatrix},\quad
\tilde A_{24}=\begin{pmatrix}
 - \mu-\lambda& 0& 0 \\
0&- \mu-\lambda& 0 \\
 A_{12}&A_{02}&A_{24} 
\end{pmatrix},\\
 &\tilde A_{34}=\begin{pmatrix}
 A_{01}+A_{02}+A_{12}+\mu & 0 & A_{24}+\lambda\\
 0 & A_{01}+A_{02}+A_{12}+ \mu& A_{24}+\lambda\\
 0 & 0 & A_{01}
\end{pmatrix}%
\end{split}
\end{equation}
and the invariant subspace to define the required residue matrices $\bar A_{i,j}$ is
\begin{align}\begin{split}
\mathcal L&=\begin{pmatrix}\ker A_{12}\\ \ker A_{02}\\ \ker A_{24}+\lambda\end{pmatrix}
 +\ker (\tilde A_{23}-\mu-\lambda)\\
 &=\begin{pmatrix}\ker B_1\\ \ker A_1\\ \ker A_{24}+\lambda\end{pmatrix}+\ker
 \begin{pmatrix}B_1+\mu&A_1&A_{24}+\lambda\\ B_1&A_1+\mu&A_{24}+\lambda\\ B_1&A_1&A_{24}+\mu+\lambda \end{pmatrix}
 \subset \mathbb C^{3N}.
\end{split}\end{align}}

Lastly in this section we give an example of hypergeometric series characterized 
by a KZ equation.  Namely, 
applying 
\begin{equation}\label{eq:Kpqr}
  \prod_{i=2}^p\tilde K_x^{-\alpha'_i-\alpha_i,\alpha_i} 
 \prod_{j=2}^q\tilde K_y^{-\beta'_j-\beta_j,\beta_j} \prod_{r=1}^r\tilde K_{x,y}^{-\gamma'_k-\gamma_k,\gamma_k}
\end{equation}
to a solution of the equation
$\D u=\alpha_1u \frac{\D x}{x-1}+\beta_1u \frac{\D y}{y-1}$, we get a KZ 
equation \eqref{eq:KZxy} with
the generalized Riemann scheme (see \cite[\S4]{Okz} for its definition)
\begin{align}\label{eq:RSKZ}
\begin{split}&\left\{\begin{matrix}
A_{01}&A_{02}&A_{03}&A_{04}&A_{12}\\
[0]_{pq+(p+q-1)r} & [0]_{pr+(p+r-1)q} & [\alpha'_i]_{q+r} & [\alpha_i]_{q+r} & [0]_{qr+(q+r-1)p} \\
[-\alpha''-\beta'']_r & [-\alpha''-\gamma'']_q & \beta_j+\gamma'_k &  \beta'_j+\gamma_k& [-\beta''-\gamma'']_p \\
\end{matrix}\right.\\
&\qquad
 \left.\begin{matrix}
A_{13}&A_{23}&A_{14}&A_{24}&A_{34}\\
 [\beta'_j]_{p+r} & [\gamma_k]_{p+q} & [\beta_j]_{p+r} & [\gamma'_k]_{p+q} & [0]_{pq+qr+rp-(p+q+r)+1}\\
\alpha_i+\gamma'_k & \alpha_i+\beta_j & \alpha_i'+\gamma_k & \alpha'_i+\beta'_j& [-\alpha''-\beta''-\gamma'']_2\\
 & & & & [-\alpha'' - \beta'']_{r-1}\\
 & & & &[-\beta''-\gamma'']_{p-1}\\
 & & & & [-\alpha''-\gamma'']_{q-1}\\
\end{matrix}\right\},
\end{split}
\allowdisplaybreaks\\
\begin{split}
&\qquad\qquad
 \alpha''_i:=\alpha_i+\alpha'_i,\ \beta''_j:=\beta_j+\beta'_j,\ \gamma''_k:=\gamma_k+\gamma'_k,\
\alpha'_1=\beta'_1=0,\\
&\qquad\qquad
 \alpha''=\sum_{i=1}^p\alpha''_i,\  \beta''=\sum_{j=1}^q\beta''_j,\ 
 \gamma''=\sum_{k=1}^r\gamma''_k,\\
&\qquad\qquad 1\le i\le p,\ 1\le j\le q,\ 1\le k\le r\quad(p\ge1,\ q\ge1,\ r\ge1).
\end{split}
\end{align}
Here, for example, the eigenvalues of the square matrix $A_{01}$ of size $R=pq+qr+rp$
are 0 with multiplicity $pq+(p+q-1)r$ and $-\alpha''-\beta''$ with multiplicity $r$. 
If the parameters $\alpha_i$, $\beta_j$, $\gamma_k$, $\alpha'_i$, $\beta'_j$, $\gamma'_k$ are generic, the matrices $A_{i,j}$ are semisimple and the KZ equation is irreducible.

Note the hypergeometric series
\begin{equation}\label{eq:HGpqr}
\begin{split}
\phi(x,y)&=\sum_{m=0}^\infty\sum_{n=0}^\infty
  \frac{\prod_{i=1}^p (\alpha_i)_{m}\prod_{j=1}^q (\beta_j)_{n}\prod_{k=1}^r (\gamma_k)_{m+n}}
  {\prod_{i=1}^{p} (1-\alpha'_i)_{m}\prod_{j=1}^{q} (1-\beta'_j)_n\prod _{k=1}^{r}(1-\gamma'_k)_{m+n}}x^my^n\\
 &\qquad
\text{with}\ \ \alpha'_1=\beta'_1=0
\end{split}
\end{equation}
is a component of a solution of this KZ equation (cf.~\eqref{eq:IKZ}).

The Riemann scheme \eqref{eq:RSKZ} is obtained by \cite[Theorem~7.1]{Okz} and \eqref{eq:Kpqr}.
The precise argument and a further study of the hypergeometric series \eqref{eq:HGpqr} will 
be given in another paper.

The index of the rigidity of this KZ equation with respect to $x$ equals
\begin{equation}
  \begin{split}
  \mathrm{Idx}_x\mathcal M&=(R-q)^2+q^2+(R-r)^2+r^2+2(p(q+r)^2 +qr)-2R^2\\
  &=2-2(q-1)(r-1)(q+r+1)
\end{split}
\end{equation}
and hence the ordinary differential equation with respect to the variable $x$ is rigid if and only 
if $r=1$ or $q=1$.

If $p=q=r=1$,  the corresponding KZ equation \eqref{eq:KKZ} is given by \eqref{eq:KxyKZ} with
\begin{align*}
 &(A_{01},A_{02},A_{03},A_{04},A_{12},A_{13},A_{14},A_{23},A_{24},A_{34},\lambda,\mu)\\
 &\quad=
 (0,\alpha_1,0,-\alpha_1,\beta_1,0,-\beta_1,-\alpha_1-\beta_1,\alpha_1+\beta_1,\gamma_1,-\gamma_1-\gamma'_1)
\end{align*}
and if follows from \eqref{eq:I2KZ} that the equation has a solution with the last component
\[\phi(x,y)=F_1(\gamma_1,\alpha_1,\beta_1,1-\gamma'_1;x,y).\]

We define a simple local solution to \eqref{eq:KZxy} at the origin, which includes the solution
we have just considered.
\begin{definition}\label{def:simple}
We define that a local solution to the equation \eqref{eq:KZxy} near the origin have
a \emph{simple monodromy} if the analytic continuation of the solution in a 
neighborhood of the origin spans one dimensional space.  
We simply call the solution a \emph{simple} solution at the origin.
We also define that a local solution of the equation to \eqref{eq:KZxy} near the line $x=0$ have a 
simple monodromy and call it a simple solution along $x=0$ 
if the analytic continuation of the solution in a neighborhood of $x=0$ spans one dimensional 
space.

By the correspondence between the equations  \eqref{eq:KZxy} and \eqref{eq:KZ} with $q=3$
and moreover a transformation by $\mathfrak S_5$ we define a local solution at $x_i=x_j=x_k$ 
and a local solution at $x_{s}=x_{t}$  to the equation  \eqref{eq:KZ} with $q=3$ 
when $\{i,j,k,s,t\} =\{0,1,2,3,4\}$.
\end{definition}

Here, for example, the path of the analytic continuation in the latter case, namely along $x=0$,  
is in $\{(x,y)\in\mathbb P^1\times\mathbb P^1
\mid |x|<\epsilon, \ 0 < |x| < \epsilon |y|\}$
with a small positive number $\epsilon$.

Then we have the following theorem.
\begin{theorem}\label{thm:KZ2}
Suppose  $\{i,j,k,s,t\} =\{0,1,2,3,4\}$ as above.
To the equation \eqref{eq:KZ} with $q=3$ 
there is one to one correspondence between a simle local at $x_i=x_j=x_k$ and a simple 
solution along $x_s=x_t$.
\end{theorem}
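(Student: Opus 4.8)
The plan is to reduce the statement to a single model case by means of the $\mathfrak S_5$–symmetry of homogeneous KZ equations recalled in \S\ref{sec:KZ}, and then to identify the two notions of \emph{simple} solution by the coordinate change that realizes the blowing up of the origin — which, crucially, is itself an element of $\mathfrak S_5$.

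First I would invoke that $\mathfrak S_5$, acting as the permutation group of $\{x_0,\dots,x_4\}$, operates on the space of homogeneous KZ equations \eqref{eq:KZ} with $q=3$ through explicit coordinate transformations of $(\mathbb P^1)^2$; it carries a local solution near one collision locus to a local solution near the transformed locus, and it preserves the property that the analytic continuations span a one–dimensional space, since that property only involves the finitely many local monodromies, which are merely permuted. As $\mathfrak S_5$ is transitive on the $3$-element subsets of $\{0,\dots,4\}$ and satisfies $\tau(I)^{c}=\tau(I^{c})$, it suffices to treat the case $\{i,j,k\}=\{0,1,3\}$ — the origin $x=y=0$ of \eqref{eq:KZxy} — together with its complement $\{s,t\}=\{2,4\}$. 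One should also record that the definitions routed through $\mathfrak S_5$ in Definition~\ref{def:simple} are independent of the chosen representative: the stabiliser of $\{0,1,3\}$ in $\mathfrak S_5$ acts on local solutions near the origin just by permuting the three lines $x=0$, $y=0$, $x=y$ meeting there (and the two remaining marked points), so ``simple'' is unaffected; likewise on the other side.

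The core step is the coordinate change $T\colon(x,y)\mapsto(x,x/y)$. By Remark~\ref{rem:blowup} this is the element $(x_0\,x_2)(x_3\,x_4)$ of $\mathfrak S_5$ and gives a chart of the blow-up of \eqref{eq:KZxy} at the origin; in the transformed equation $T^{*}\mathcal M$ the exceptional line is ``$X=0$'', namely the locus $x_0=x_3$ of $T^{*}\mathcal M$, which under the relabelling $(x_0\,x_2)(x_3\,x_4)$ is precisely the locus $x_2=x_4$ of $\mathcal M$. Hence, by Definition~\ref{def:simple}, a simple solution of $\mathcal M$ along $x_2=x_4$ is the same thing as a simple solution of $T^{*}\mathcal M$ along $X=0$. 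Now one checks that $T$ restricts to a biholomorphism of the punctured neighbourhood of the origin with its three singular lines deleted, $\{\,0<|x|<\epsilon,\ |y|<\epsilon\,\}\setminus(\{y=0\}\cup\{x=y\})$, onto the region $\{\,|X|<\epsilon,\ 0<|X|<\epsilon|Y|\,\}$ minus its singular lines used in Definition~\ref{def:simple} for ``along $X=0$'': indeed $T$ sends $\{y=0\}$ to $\{Y=\infty\}$ and $\{x=y\}$ to $\{Y=1\}$, and no singular line of the equation is created or lost on these regions, so $T$ identifies the two fundamental groups and carries analytic continuations to analytic continuations. Therefore a local solution of $\mathcal M$ near the origin is, via $T$, exactly a local solution of $\mathcal M$ along $x_2=x_4$, and its span of analytic continuations is one–dimensional on one side if and only if it is on the other. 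This proves the theorem in the model case, and the first step yields the general case.

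I expect the main obstacle to be this last identification: verifying carefully that the blow-up chart is a genuine biholomorphism between exactly the two punctured regions prescribed by Definition~\ref{def:simple} — which is precisely why the region for a solution ``along a line'' is taken in the skewed shape $\{|x|<\epsilon,\ 0<|x|<\epsilon|y|\}$ — together with the bookkeeping of singular lines under $T$ and the independence of the $\mathfrak S_5$–reduction from the choices made. Once this is in place the correspondence is completely explicit, computed by composing $T$ with the one–variable restriction formulas of Proposition~\ref{prop:to1}; for instance the solution $F_1(\gamma_1,\alpha_1,\beta_1,1-\gamma'_1;x,y)$, which is simple at the origin, corresponds through $T$ to the simple solution along the transformed line built from $F_1(\gamma_1,\alpha_1,\beta_1,1-\gamma'_1;x,x/y)$, tying in with identities such as $F_1(a,b,b',c;x,0)=F(b,a,c;x)$.
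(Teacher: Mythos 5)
Your proposal is correct and follows essentially the same route as the paper: reduce by the $\mathfrak S_5$-symmetry to the case $\{i,j,k\}=\{0,1,3\}$, then use the blow-up chart $(x,y)\mapsto(x,\tfrac xy)$ — i.e.\ the permutation $(x_0\,x_2)(x_3\,x_4)$ of Remark~\ref{rem:blowup} — under which the origin becomes the line $x=0$ and $x_2=x_4$ corresponds to $x_0=x_3$. The paper's own proof is exactly this, stated more tersely; your added care about matching the punctured regions and the well-definedness of ``simple'' under the stabiliser is a reasonable elaboration rather than a different argument.
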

\begin{proof}

The coordinate $(x,\frac xy)$ is a local coordinate of a blowing up of the singularities of the
equation \eqref{eq:KZ} around the origin.  Then the origin corresponds to the line $x=0$.
This coordinate transformation corresponds to the map $(x_0,x_1,x_2,x_3,x_4)$ $\mapsto
(x_2,x_1,x_0,x_4,x_3)$, which is explained in Remark~\ref{rem:blowup}.
 Since $x_2=x_4$ corresponds to $x_0=x_3$, we have the theorem when
$(i,j,k,s,t)=(0,1,3,2,4)$.  Note that the coordinate $(\frac yx,y)$ gives the same conclusion.
Then the symmetry $\mathfrak S_5$ proves the theorem.
\end{proof}
This theorem says that an eigenvalue of $A_{24}$ with free multiplicity corresponds to a 
simple solution at $x_0=x_1=x_3$ which is the origin in $(x,y)$ coordinate.
Hence at the origin we have $pq$ independent simple solutions of the KZ equation with the Riemann 
scheme \eqref{eq:RSKZ} if the parameters are generic.
\begin{remark}  The space of local solutions at a normally 
crossing singular point defined by $x_i=x_j$ and $x_s=x_t$ under the above notation 
is spanned by simple solutions if the parameters are generic (cf.~\cite{KO}).

\end{remark}
\begin{remark}\label{rm:1-x}
The transformation
\begin{equation*}
  \mathcal O_0\ni u=\sum c_{m,n}x^my^n \mapsto (1-x)^{-a}(1-y)^{-b}u
 =\sum (a)_i(b)_jc_{m,n}x^{i+m}y^{j+n}
\end{equation*}
induces the transformation $(A_{02},A_{12})\mapsto (A_{02}+a,A_{12}+b)$ of the equation 
\eqref{eq:KZxy}.
Then the coefficients of the resulting power series may be complicated (cf.~\eqref{eq:211}).
\end{remark}
\begin{remark}
The transformation of residue matrices induced by $\tilde K_x^\lambda$, $\tilde K_y^\lambda$ and  
the $\tilde K_{x,y}^\lambda$ and the calculation of 
Riemann scheme \eqref{eq:RSKZ} for given $p$, $q$ and $r$ etc.\ are 
supported by the functions \texttt{m2mc} and \texttt{mc2grs} in the library \cite{ORisa}
of the computer algebra \texttt{Risa/Asir}.
For example, by the commands
\begin{verbatim}
    R=os_md.mc2grs(0,["K",[4,3,2]]);
    os_md.mc2grs(R,"get"|dviout=1,div=5);
\end{verbatim}
we get \eqref{eq:RSKZ} on a display in the case $(p,q,r)=(4,3,2)$.
This is enabled by using a \texttt{PDF} file output by functions in the library
under the computer algebra.
Here \texttt{div=5} indicates to divide the Riemann scheme by 5 columns into two parts as 
in \eqref{eq:RSKZ} and \texttt{R} is a list of simultaneous eigenspace decomposition at 
15 normally crossing singularities of the corresponding KZ equation.
The algorithm for the calculation is given by \cite[Theorem~7.1]{Okz}.
Moreover by the command
\begin{verbatim}
    os_md.mc2grs(R,"rest"|dviout=1);
\end{verbatim}
we get the Riemann scheme of the induced equations on the 10 singular hypersurfaces 
corresponding to eigenvalues of 10 residue matrices.
If \texttt{"spct"} is indicated in place of \texttt{"rest"}, a table of 
spectral types with respect to the variables $x_i$ for $i=0,\dots,4$ and the indices of rigidities
are displayed.  If ``\texttt{dviout=-1}" is indicated in place of \texttt{dviout=1}, the result is 
given by a \TeX\ source in place of displaying the result on a screen. If ``\texttt{|dviout=1}" is not indicated, the result is given in a format recognized by \texttt{Risa/Asir}.
\end{remark}
\section{Fuchsian ordinary differential equations}
In this section we consider a Fuchsian differential equation
\begin{align}\label{eq:OdKZ}
\mathcal N : \frac {\D u}{\D x}=\frac{A_y}{x-y}u+\frac{A_1}{x-1}u+\frac{A_0}xu
\end{align}
with regular singularities at $x=0$, $1$, $y$ and $\infty$. 
Here the residue matrices $A_y$, $A_1$ and $A_0$ are constant square matrices of size $N$.
If \eqref{eq:OdKZ} is irreducible and rigid or
\begin{gather}
 \dim\bigl(Z(A_y)\cap Z(A_1)\cap Z(A_0)\bigr)=1\label{eq:IrrKZ}
 \intertext{ and }
 \dim Z(A_y)+\dim Z(A_1)+\dim Z(A_0)+\dim Z(A_y+A_1+A_0) - 2N^2=2,\label{eq:idx}
\end{gather}
the equation
\eqref{eq:OdKZ} is constructed by successive 
applications of middle convolutions and additions to the trivial equation $u'=0$.
Here $Z(A)$ denotes the space of the centralizer in $M(N,\mathbb C)$ for $A\in M(N,\mathbb C)$
and the left hand side of \eqref{eq:idx} is the index of the rigidity of the equation.
We note that middle convolutions can be replaced by the transformation of equations 
induced by $\tilde K_x^\mu$ with additions.

We assume that the equation \eqref{eq:OdKZ} can be extended to the compatible equation
\begin{align} 
\frac {\p u}{\p y}=\frac{A_y}{x-y} u+\frac{B_1}{y-1}u+\frac{B_0}y u.
\end{align}
If the equation \eqref{eq:OdKZ} is rigid, it extends to the compatible equation and moreover if 
the equation satisfies \eqref{eq:IrrKZ}, 
the matrices $B_0$ and $B_1$ are uniquely determined by \eqref{eq:OdKZ} up to the difference of
scalar matrices. 
We can apply the transformations induced by $\tilde K_y^\mu$ and $\tilde K_{x,y}^\mu$
to \eqref{eq:OdKZ}.
Note that these transformations may change the index of rigidity as was shown in the last section.

The transformations induced by $\tilde K_x^\mu$, $\tilde K_y^\mu$ and $\tilde K_{x,y}^\mu$ are 
given by the following \eqref{eq:OdKx}, \eqref{eq:OdKy} and \eqref{eq:OdKxy}, respectively, 
with calculating the induced matrices of the residue matrices on $\mathbb C^{3N}/\mathcal L$.
\begin{align}
\label{eq:OdKx}
\begin{split}
\frac {\D\tilde u}{\D x}
&=
 \frac{\left(\begin{smallmatrix}
  A_y+\mu&A_1&A_0\\ 0&0&0\\ 0&0&0
 \end{smallmatrix}\right)}{x-y}\tilde u
 +
 \frac{\left(\begin{smallmatrix}
  0 & 0 & 0\\
  A_y&A_1+\mu&A_0\\
  0&0&0
 \end{smallmatrix}\right)}{x-1}\tilde u
 +
 \frac{\left(\begin{smallmatrix}
  -\mu & 0 & 0\\
  0& -\mu & 0\\
  A_y&A_1&A_0
 \end{smallmatrix}\right)}{x}\tilde u,\\
\mathcal L&=\left(\begin{smallmatrix}\ker A_y\\ \ker A_1\\ \ker A_0\end{smallmatrix}\right)
 +\ker\left(\begin{smallmatrix}A_y+\mu&A_1&A_0\\ A_y&A_1+\mu &A_0\\ A_y&A_1&A_0
  +\mu\end{smallmatrix}\right),
\end{split}
\allowdisplaybreaks\\
\label{eq:OdKy}
\begin{split}
\frac {\D\tilde u}{\D x}
&=
 \frac{\left(\begin{smallmatrix}
  A_y+\mu&B_1&B_0\\ 0&0&0\\ 0&0&0
 \end{smallmatrix}\right)}{x-y}\tilde u
 +
 \frac{\left(\begin{smallmatrix}
  A_1+B_1 & -B_1 & 0\\
  -A_y &A_1+A_y& 0\\
  0&0&A_1
 \end{smallmatrix}\right)}{x-1}\tilde u
 +
 \frac{\left(\begin{smallmatrix}
  A_0+B_0&0 & -B_0\\
  0& A_0& 0\\
  -A_y& 0&A_0+A_y
 \end{smallmatrix}\right)}{x}\tilde u,
\\
\mathcal L&=\left(\begin{smallmatrix}\ker A_y\\ \ker B_1\\ \ker B_0\end{smallmatrix}\right)
 +\ker\left(\begin{smallmatrix}A_y+\mu&B_1&B_0\\ B_y&A_1+\mu &B_0\\ B_y&A_1&B_0+\mu\end{smallmatrix}\right),
\end{split}
\allowdisplaybreaks\\
\label{eq:OdKxy}
\begin{split}
\frac {\D\tilde u}{\D x}
&=
 \frac{\left(\begin{smallmatrix}
  A_y+A_1&-A_1&0\\ -B_1&A_y+B_1&0\\ 0&0&A_y
 \end{smallmatrix}\right)}{x-y}\tilde u
 +
 \frac{\left(\begin{smallmatrix}
  0 & 0 & 0\\
  B_1&A_1+\mu&A_{24}\\
  0&0&0
 \end{smallmatrix}\right)}{x-1}\tilde u
 +
 \frac{\left(\begin{smallmatrix}
  A_0 & A_1 & 0\\
  0& A_{14}-\mu & 0\\
  0&A_1&A_0
 \end{smallmatrix}\right)}{x}\tilde u,\\
 \mathcal L&=\left(\begin{smallmatrix}\ker B_1\\ \ker A_1\\ \ker A_{24}
 \end{smallmatrix}\right)
 +\ker \left(\begin{smallmatrix}B_1+\mu&A_1&A_{24}\\ 
 B_1&A_1+\mu&A_{24}\\ B_1&A_1&A_{24}+\mu \end{smallmatrix}\right).
\end{split}
\end{align}
Here 
\begin{align*}
A_{14}&=-A_y-B_0-B_1,\allowdisplaybreaks\\
A_{24}&=-(A_{02}+A_{12}+A_{23})=(A_{01}+A_{02}+A_{03}+A_{12}+A_{13})-A_{02}-A_{12}\\
 & = A_{01}+A_{03}+A_{13}= A_y+A_0+B_0.
\end{align*}

\if\ORG
\else
\begin{acknowledgement}
This work was supported by Grant-in-Aid for Scientific Researches (C), 
No.\ 18K03341, Japan Society of Promotion of Science.
\end{acknowledgement}
\fi

\small

\end{document}